\pgfplotsset{compat=1.18}
\definecolor{mycolour}{rgb}{0.7,0.7,0.7}
\theoremstyle{definition}
\newtheorem*{defi}{Definition}
\theoremstyle{plain}
\newtheorem{thm}{Theorem}
\newtheorem*{thm*}{Theorem}
\newtheorem{prop}{Proposition}
\title[Explicit Upper Bounds on Decay Rates of Fourier Transforms]{Explicit Upper Bounds on Decay Rates of Fourier Transforms of Self-similar Measures on Self-similar Sets}
\author{Ying Wai Lee}
\date{\today}
\begin{document}
\begin{abstract}
    The study of Fourier transforms of probability measures on fractal sets plays an important role in recent research. Faster decay rates are known to yield enhanced results in areas such as metric number theory. This paper focuses on self-similar probability measures defined on self-similar sets. Explicit upper bounds are derived for their decay rates, improving upon prior research. These findings are illustrated with an application to sets of numbers whose digits in their L\"uroth representations are restricted to a finite set.
\end{abstract}
\maketitle
\tableofcontents
\section{Introduction \& Results}
The Fourier transform, a fundamental tool in harmonic analysis, is initially defined for functions to analyse their frequency components. It is later generalised to measures, enabling the frequency analysis of distributions. Let $F\subset[0,1]$ and $\mu$ be a probability measure on $F$. Define $\widehat{\mu}:\mathbb{R}\to\mathbb{C}$, the Fourier transform of $\mu$, by, for any $\xi\in\mathbb{R}$,
\begin{align*}
    \widehat{\mu} (\xi)\coloneqq\int_{F}e^{-2\pi i\xi x}\,\mu(\mathrm{d}x).
\end{align*}
One research focus is the study of probability measures with the Rajchman property; that is, measures whose Fourier transform vanishes at infinity:
\begin{align}
    \label{eq: Rajchman}
    \lim_{\xi\to\pm\infty}\widehat{\mu}(\xi)=0.
\end{align}
The Rajchman property of measures reflects a form of regularity at infinity; the measure does not exhibit excessive concentration at any specific scale.

Although one can be interested in establishing the exact convergence rate in \eqref{eq: Rajchman}, knowing some upper bounds of the rate can provide certain information on some metric properties of the set. Let $\Phi:\mathbb{N}\to\mathbb{R}^+$ be a function. $\Phi$ is said to be a decay rate of $\mu$ if, as $\xi\to\pm\infty$,
\begin{align}
    \label{eq: Lyons assumption 0}
    \widehat{\mu}(\xi)=O\left(\Phi\left(|\xi|\right)\right).
\end{align}
In the case of $\lim_{n\to+\infty}\Phi(n)=0$, \eqref{eq: Lyons assumption 0} quantifies the convergence rate in \eqref{eq: Rajchman}.

The classical results of Lyons \cite[Theorems 3 \& 4]{LyonsRussell1986Tmon} provide foundational insights into how the decay rate of a probability measure is useful in metric number theory. These two results can also be compared to highlight how faster decay can lead to stronger conclusions.
\begin{thm*}[Lyons, 1986; {\cite[Theorem 4]{LyonsRussell1986Tmon}}]
    Let $F\subset[0,1]$ and $\mu$ be a probability measure on $F$. Suppose, there exists a non-increasing $\Phi:\mathbb{N}\to\mathbb{R}^+$ such that as $\xi\to\pm\infty$, \eqref{eq: Lyons assumption 0} holds and
    \begin{align}
        \label{eq: Lyons assumption 4}
        \sum_{n=2}^\infty\frac{\Phi(n)}{n\log{n}}<+\infty.
    \end{align}
    Then, for $\mu$-almost every $x\in F$ and any sequence $(q_n)_{n\in\mathbb{N}}$ of positive integers, if $\inf_{n\in\mathbb{N}}{q_{n+1}}/{q_n}>1$, then the sequence $(q_nx\mod1)_{n\in\mathbb{N}}$ is uniformly distributed in $[0,1)$.
\end{thm*}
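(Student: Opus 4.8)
The plan is to combine Weyl's criterion for uniform distribution with the Davenport--Erd\H{o}s--LeVeque mean-square criterion, feeding into the latter a second-moment estimate that draws on the decay hypothesis \eqref{eq: Lyons assumption 0} together with the lacunarity of $(q_n)$. Since a sequence $(u_n)$ in $[0,1)$ is uniformly distributed exactly when $\frac1N\sum_{n=1}^N e^{2\pi i h u_n}\to 0$ for every nonzero integer $h$, and there are only countably many such $h$, it suffices to fix $h\in\mathbb{Z}\setminus\{0\}$ and a sequence $(q_n)$ of positive integers with $\rho\coloneqq\inf_n q_{n+1}/q_n>1$, and to show that $\frac1N\sum_{n=1}^N e^{2\pi i h q_n x}\to 0$ for $\mu$-almost every $x$; intersecting over $h$ the resulting countably many conull sets then yields the statement for that sequence. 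By the Davenport--Erd\H{o}s--LeVeque criterion, this almost-everywhere convergence follows once one establishes
\begin{align*}
    \sum_{N=1}^\infty\frac{1}{N^{3}}\int_F\Bigl|\sum_{n=1}^N e^{2\pi i h q_n x}\Bigr|^2\,\mu(\mathrm{d}x)<+\infty,
\end{align*}
so the whole problem reduces to this one summability statement.

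Expanding the modulus and integrating term by term gives $\int_F|\sum_{n\le N}e^{2\pi i h q_n x}|^2\,\mu(\mathrm{d}x)=\sum_{m,n=1}^N\widehat\mu\bigl(h(q_m-q_n)\bigr)$, whose diagonal contributes exactly $N$ and whose off-diagonal part equals $2\,\mathrm{Re}\sum_{1\le n<m\le N}\widehat\mu\bigl(h(q_m-q_n)\bigr)$; I would bound the latter in absolute value by $\sum_{n<m}\bigl|\widehat\mu(h(q_m-q_n))\bigr|$, discarding cancellation. Lacunarity enters via the elementary inequality $q_m-q_n\ge q_m-q_{m-1}\ge(1-\rho^{-1})q_m\ge(1-\rho^{-1})\rho^{m-1}$, valid for $n<m$, so that $h(q_m-q_n)$ is a nonzero integer of absolute value at least $c\,\rho^{m-1}$ with $c\coloneqq|h|(1-\rho^{-1})>0$. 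Since $\Phi$ is non-increasing and $\widehat\mu(\xi)=O(\Phi(|\xi|))$, every off-diagonal term with $m$ beyond a threshold $m_0=m_0(h,\rho)$ is at most a constant multiple of $\Phi(\lceil c\rho^{m-1}\rceil)$, while the finitely many remaining pairs contribute $O_h(1)$, so that
\begin{align*}
    \int_F\Bigl|\sum_{n=1}^N e^{2\pi i h q_n x}\Bigr|^2\,\mu(\mathrm{d}x)\ll_{h,\rho} N+\sum_{m=2}^N m\,\Phi\bigl(\lceil c\rho^{m-1}\rceil\bigr).
\end{align*}
Substituting into the displayed sum and interchanging the order of summation via $\sum_{N\ge m}N^{-3}\asymp m^{-2}$ reduces everything to proving $\sum_{m\ge 2}m^{-1}\Phi(\lceil c\rho^{m-1}\rceil)<+\infty$.

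It remains to extract this from \eqref{eq: Lyons assumption 4} alone, and the mechanism is the change of scale $n\leftrightarrow\rho^{m}$, which turns the weight $\frac{1}{n\log n}$ into a constant multiple of $\frac1m$: a block of integers $n\in[\,c\rho^{m-1},c\rho^{m}\,]$ has cardinality $\asymp\rho^{m}$, satisfies $\log n\asymp m$ there, and, $\Phi$ being monotone, gives $\sum_{c\rho^{m-1}\le n\le c\rho^{m}}\frac{\Phi(n)}{n\log n}\gg\frac1m\,\Phi(\lceil c\rho^{m}\rceil)$; summing over $m$, and noting that consecutive blocks overlap only at an endpoint, yields $\sum_{m}m^{-1}\Phi(\lceil c\rho^{m}\rceil)\ll\sum_{n\ge 2}\frac{\Phi(n)}{n\log n}<+\infty$, after which a shift of index disposes of the discrepancy between $\rho^{m-1}$ and $\rho^{m}$. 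The step I expect to be most delicate is exactly this calibration and its interface with what precedes it: the factor $m$ produced by counting off-diagonal pairs and the factor $N^{-3}$ supplied by the Davenport--Erd\H{o}s--LeVeque weighting must combine so as to land precisely on the $\frac{1}{n\log n}$ scale, leaving no slack for a cruder bound, and one must keep honest track of the finitely many small-$m$ terms for which the $O$-estimate for $\widehat\mu$ has not yet taken hold; if a fully self-contained treatment is wanted, one would additionally have to supply the Rademacher--Menshov-type maximal inequality underpinning the criterion rather than citing it.
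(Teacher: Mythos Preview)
The paper does not prove this statement: it is quoted in the introduction as a result of Lyons \cite[Theorem~4]{LyonsRussell1986Tmon} and used only as motivation, so there is no in-paper argument to compare against. Your outline---Weyl's criterion, the Davenport--Erd\H{o}s--LeVeque second-moment test, the lacunary lower bound $q_m-q_n\ge(1-\rho^{-1})q_m\ge(1-\rho^{-1})\rho^{m-1}$, and the Cauchy-condensation passage from $\sum_n\Phi(n)/(n\log n)$ to $\sum_m m^{-1}\Phi(\lceil c\rho^{m}\rceil)$---is the standard route to Lyons's theorem and the steps fit together as you describe. The one point worth flagging is the quantifier order: your argument fixes a lacunary sequence and produces a conull set depending on that sequence, which matches the usual reading of Lyons's result; if the statement were read as ``there is a single $\mu$-conull set that works simultaneously for \emph{every} lacunary sequence,'' your approach would not suffice, but that stronger reading is not what is being asserted here.
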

In the case of $\Phi$ exhibiting logarithmic decay (i.e., there exists $\varepsilon>0$ such that as $n\to+\infty$, $\Phi(n)=O(\log^{-\varepsilon}{n})$), condition~\eqref{eq: Lyons assumption 4} is satisfied. For $\Phi$ with faster decay, such as power-law decay (i.e., there exists $\varepsilon>0$ such that as $n\to+\infty$, $\Phi(n)=O(n^{-\varepsilon})$), the conclusion of the above result can be strengthened, allowing for broader choices of sequences.
\begin{thm*}[Lyons, 1986; {\cite[Theorem 3]{LyonsRussell1986Tmon}}]
    Let $F\subset[0,1]$ and $\mu$ be a probability measure on $F$. Suppose, there exists a non-increasing $\Phi:\mathbb{N}\to\mathbb{R}^+$ such that as $\xi\to\pm\infty$, \eqref{eq: Lyons assumption 0} holds and
    \begin{align}
        \label{eq: Lyons assumption 3}
        \sum_{n=1}^\infty\frac{\Phi(n)}{n}<+\infty.
    \end{align}
    Then, for $\mu$-almost every $x\in F$ and any strictly increasing sequence $(q_n)_{n\in\mathbb{N}}$ of positive integers, the sequence $(q_nx\mod1)_{n\in\mathbb{N}}$ is uniformly distributed in $[0,1)$.
\end{thm*}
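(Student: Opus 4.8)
The plan is to fix, once and for all, a strictly increasing sequence $(q_n)_{n\in\mathbb{N}}$ of positive integers together with a nonzero integer $h$, and to prove that $\tfrac1N\sum_{n=1}^{N}e^{2\pi i h q_n x}\to 0$ as $N\to\infty$ for $\mu$-almost every $x$; intersecting the resulting countably many full-measure sets over $h\in\mathbb{Z}\setminus\{0\}$ and invoking Weyl's criterion then gives the theorem. (The exceptional $\mu$-null set unavoidably depends on $(q_n)$: for any irrational $x$ one may enumerate $\{m\ge 1:\{mx\}\in[0,\tfrac1{10})\}$ in increasing order to obtain a strictly increasing integer sequence along which the points $(q_nx \bmod 1)$ stay in $[0,\tfrac1{10})$, hence are not uniformly distributed; so the statement is to be read with the null set allowed to depend on the sequence.) As a preliminary normalisation I would replace $\Phi$ by
\begin{align*}
    \Psi(n)\coloneqq\sup\bigl\{\,|\widehat{\mu}(k)| : k\in\mathbb{Z},\ |k|\ge n\,\bigr\},
\end{align*}
which is non-increasing, bounded by $1$ (as $|\widehat{\mu}|\le 1$), satisfies $\Psi(n)=O(\Phi(n))$ — so $\sum_n\Psi(n)/n<\infty$ by \eqref{eq: Lyons assumption 3} — and obeys $|\widehat{\mu}(k)|\le\Psi(|k|)$ for every nonzero integer $k$.

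The heart of the matter is a second-moment estimate for $S_N(x)\coloneqq\tfrac1N\sum_{n=1}^{N}e^{2\pi i h q_n x}$. Expanding the square,
\begin{align*}
    \int_F|S_N|^2\,\mathrm{d}\mu=\frac1{N^2}\sum_{n,m=1}^{N}\widehat{\mu}\bigl(-h(q_n-q_m)\bigr),
\end{align*}
where the $N$ diagonal terms contribute $1/N$. Because $(q_n)$ is strictly increasing in $\mathbb{N}$ we have $q_n-q_m\ge n-m$ for $n>m$, so for $n\ne m$ the integer $h(q_n-q_m)$ is nonzero with $|h(q_n-q_m)|\ge|n-m|$, whence $|\widehat{\mu}(-h(q_n-q_m))|\le\Psi(|n-m|)$. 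Summing over the $2(N-d)$ ordered pairs at distance $d$ gives
\begin{align*}
    \int_F|S_N|^2\,\mathrm{d}\mu\le\frac1N+\frac2{N^2}\sum_{d=1}^{N-1}(N-d)\Psi(d)\le\frac2N\Bigl(1+\sum_{d=1}^{N}\Psi(d)\Bigr).
\end{align*}
In particular $S_N\to 0$ in $L^2(\mu)$, since $\Psi(d)\to 0$ forces $\tfrac1N\sum_{d\le N}\Psi(d)\to 0$.

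The main obstacle is upgrading this mean-square decay to $\mu$-almost-everywhere decay, the point being that $\int|S_N|^2\,\mathrm{d}\mu$ tends to $0$ but is not in general summable. Here the precise form of hypothesis \eqref{eq: Lyons assumption 3} enters: by the bound just obtained and an interchange of summation,
\begin{align*}
    \sum_{N=1}^{\infty}\frac1N\int_F|S_N|^2\,\mathrm{d}\mu\le 2\sum_{N=1}^{\infty}\frac1{N^2}+2\sum_{d=1}^{\infty}\Psi(d)\sum_{N\ge d}\frac1{N^2}\le C\Bigl(1+\sum_{d=1}^{\infty}\frac{\Psi(d)}{d}\Bigr)<\infty,
\end{align*}
so the Davenport--Erd\H{o}s--LeVeque metric criterion for uniform distribution applies and yields $S_N(x)\to 0$ for $\mu$-almost every $x$. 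If one prefers to argue directly, the same inequality gives, via Tonelli, $\sum_{N}|S_N(x)|^2/N<\infty$ for $\mu$-a.e.\ $x$, and this is combined with the elementary slow-variation bound $|S_{N+1}(x)-S_N(x)|\le(1+|S_N(x)|)/(N+1)\le 2/(N+1)$: were $|S_N(x)|\ge\delta$ for infinitely many $N$, one could extract a geometrically separated such subsequence $(N_j)$, and on each of the disjoint blocks $[N_j,(1+c\delta)N_j]$ the slow-variation bound keeps $|S_M(x)|\ge\delta/2$, contributing a fixed positive amount to $\sum_M|S_M(x)|^2/M$ — a contradiction; hence $\limsup_N|S_N(x)|\le\delta$ for every $\delta>0$. (A third, equivalent packaging runs Borel--Cantelli along a geometric subsequence $N_k\asymp(1+\varepsilon)^k$ for each fixed $\varepsilon>0$: sparsity makes $\sum_k\int|S_{N_k}|^2\,\mathrm{d}\mu<\infty$ by the same interchange, while density of the subsequence lets the slow-variation bound fill the gaps up to an $O(\varepsilon)$ error, and one finishes by letting $\varepsilon\downarrow 0$.)

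With $S_N(x)\to 0$ for $\mu$-a.e.\ $x$ established for each fixed $h\ne 0$, discarding the countable union of the corresponding null sets and applying Weyl's criterion completes the argument. Thus the only genuine difficulty is the passage from the second moment to almost-everywhere convergence in the third step; the remaining ingredients — the spacing inequality $q_n-q_m\ge n-m$, the expansion of $\int|S_N|^2\,\mathrm{d}\mu$, and the interchange of summation — are routine, and the role of \eqref{eq: Lyons assumption 3} (as opposed to the weaker \eqref{eq: Lyons assumption 4}) is exactly to force the series $\sum_N N^{-1}\int|S_N|^2\,\mathrm{d}\mu\asymp\sum_d\Psi(d)/d$ to converge.
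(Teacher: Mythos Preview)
The paper does not prove this statement; it is quoted from Lyons's 1986 paper purely as background motivation, so there is no in-paper argument to compare against. Judged on its own merits your proposal is correct and is precisely the classical Davenport--Erd\H{o}s--LeVeque route that underlies Lyons's result: expand the second moment of the Weyl sums, exploit the gap bound $|q_n-q_m|\ge|n-m|$ coming from strict monotonicity in $\mathbb{N}$, interchange summations to convert \eqref{eq: Lyons assumption 3} into $\sum_N N^{-1}\int|S_N|^2\,\mathrm{d}\mu<\infty$, and conclude via the DEL criterion (or one of the equivalent direct arguments you sketch). Your normalisation $\Psi(n)=\sup_{|k|\ge n}|\widehat\mu(k)|$ is a clean way to handle the fact that \eqref{eq: Lyons assumption 0} is only asymptotic, and your parenthetical remark that the exceptional null set must depend on the sequence $(q_n)$ is both correct and a useful clarification of the paper's slightly ambiguous quantifier order.
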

These results illustrate how faster decay has the potential to yield stronger conclusions. Therefore, deriving faster decay rates stands as an important research focus.

In the works of Kaufman \cite{kaufman1980continued} and Queff{\'e}lec--Ramar{\'e} \cite{queffelec2003analyse}, sets of numbers whose partial quotients in their continued fraction representations are restricted to a finite set $\mathcal{A}\subset\mathbb{N}$ have been studied. These work prove that each of these sets (provided their Hausdorff dimensions exceed certain thresholds) supports a probability measure $\mu$ satisfying both conditions~\eqref{eq: Lyons assumption 0} and \eqref{eq: Lyons assumption 3} for some $\Phi$ with power-law decay.

For any $x\in[0,1)\setminus\mathbb{Q}$, there exists a unique sequence $(a_n)_{n\in\mathbb{N}}$ of positive integers, referred to as partial quotients, such that
\begin{align}
    x
    &=[a_1,a_2,a_3,\ldots] \label{eq: CF Repr}\\
    &\coloneqq{\frac{1}{\displaystyle a_1+\frac{1}{\displaystyle a_2+\frac{1}{a_3+\cdots}}}}; \label{eq: CF Expa}
\end{align}
where equation~\eqref{eq: CF Repr} is referred to as the continued fraction representation of $x$, and equation~\eqref{eq: CF Expa} as the continued fraction expansion of $x$. Define, for any $\mathcal{A}\subset\mathbb{N}$, $F_\mathcal{A}\coloneqq\{[a_1,a_2,a_3,\ldots]:\text{$a_n\in\mathcal{A}$ for all $n\in\mathbb{N}$}\}$; and for any $N\in\mathbb{N}$, $F_N\coloneqq F_{\mathbb{N}\cap[1,N]}$. These sets play an important role in the study of Diophantine approximation. According to \cite[Theorem 1.4]{beresnevich2016metricdiophantineapproximationaspects}, the countable union $\bigcup_{N\in\mathbb{N}}F_N$ coincides with the set of all badly approximable numbers in $[0,1]$. 

\begin{thm*}[Kaufman, 1980; \cite{kaufman1980continued}]
    Let $N\in\mathbb{N}\setminus\{1\}$. Suppose, the Hausdorff dimension of $F_N$, $\dim{F_N}>2/3$. Then, there exists a probability measure $\mu_N$ on $F_N$ such that as $\xi\to\pm\infty$,
    \begin{align*}
        \widehat{\mu_{{N}}}(\xi)=O\left(|\xi|^{-0.0007}\right).
    \end{align*}
\end{thm*}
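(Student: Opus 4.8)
I would take for $\mu_N$ the canonical conformal (Gibbs) measure of $F_N$. Writing $f_a\colon x\mapsto 1/(a+x)$ for $a\in\{1,\dots,N\}$, so that $F_N$ is the attractor of $\{f_a\}$, and $f_w\coloneqq f_{a_1}\circ\cdots\circ f_{a_k}$, $I_w\coloneqq f_w([0,1])$ for a finite word $w=(a_1,\dots,a_k)$, the thermodynamic formalism for this uniformly contracting conformal system furnishes a unique Borel probability measure $\mu_N$ on $F_N$ with $\mu_N(f_a(A))=\int_A|f_a'|^{s}\,\mathrm{d}\mu_N$ for all Borel $A$ and all $a$, where $s\coloneqq\dim F_N$. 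It satisfies the Gibbs estimate $\mu_N(I_w)\asymp|I_w|^{s}\asymp\sup_x|f_w'(x)|^{s}$, the bounded-distortion estimate $\sup_x|f_w'(x)|\le C\inf_x|f_w'(x)|$ with $C$ independent of $w$, and $|I_w|\le C\theta^{|w|}$ for a fixed $\theta\in(0,1)$. This is the measure for which the bound will be proved; it is enough to establish $\widehat{\mu_N}(\xi)=O(|\xi|^{-\varepsilon})$ for \emph{some} $\varepsilon>0$ and then to track the constants, the stated exponent $0.0007$ being the outcome of that bookkeeping.

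\textbf{Step 2: a self-similar oscillatory decomposition.} Given a large frequency $\xi$, I would fix a level $k=k(\xi)$ for which the cylinders $I_w$ with $|w|=k$ have length $\asymp|\xi|^{-\beta}$, with $\beta\in(0,1)$ a parameter to be optimised. Decomposing $\mu_N$ over these cylinders and parametrising $x\in I_w$ by the tail variable $t\in F_N$ via $x=f_w(t)$, the conformality relation yields
\[
\widehat{\mu_N}(\xi)=\sum_{|w|=k}\mu_N(I_w)\,e^{-2\pi i\xi f_w(0)}\int_{F_N}e^{-2\pi i\xi\left(f_w(t)-f_w(0)\right)}\,\mathrm{d}\widetilde{\mu}_w(t),
\]
where $\widetilde{\mu}_w\colon A\mapsto\mu_N(f_w(A))/\mu_N(I_w)$ is comparable to $\mu_N$ uniformly in $w$, again by conformality and bounded distortion. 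The phase $t\mapsto\xi(f_w(t)-f_w(0))$ is monotone, with derivative of size $\asymp|\xi|\,|I_w|\asymp|\xi|^{1-\beta}$, and — decisively — it is genuinely non-linear: $f_w$ being Möbius, $f_w''\neq0$ with $|f_w''|\asymp|I_w|$, so the quadratic part of the phase cannot be discarded.

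\textbf{Step 3: extracting decay.} Each inner integral is, after rescaling, the Fourier transform at a bounded frequency of the push-forward of $\widetilde{\mu}_w\asymp\mu_N$ under a Möbius map of scale $\asymp|\xi|^{1-\beta}$. Approximating that map by its affine part on a still finer partition and collecting the quadratic remainders, van der Corput's inequality (applied to the convexity $|f_w''|$) produces cancellation, which I would organise by arranging the whole estimate as a recursion over dyadic frequency scales $|\xi|,\,|\xi|/K,\,|\xi|/K^2,\dots$ that gains a definite factor $\rho<1$ at each scale, the gain coming from the spreading of the phases $\xi f_w(0)$ and the convexity of the $f_w$. Iterating over the $\asymp\log|\xi|$ scales down to a bounded frequency and invoking the trivial bound $|\widehat{\mu_N}|\le1$ at the bottom converts the per-scale gain into a power $|\xi|^{-\varepsilon}$; optimising $\beta$ and keeping every constant explicit yields $\varepsilon=0.0007$ under the hypothesis $\dim F_N>2/3$.

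\textbf{Main obstacle.} The crux is Step 3: a single cylinder's oscillatory integral saves only a bounded amount, so term-by-term estimation merely reproduces the recursion with no net gain. A genuine per-scale gain $\rho<1$ must be extracted simultaneously from (i) the non-linearity of the branches $f_w$ — available here precisely because $N\ge 2$, so the slopes $\xi f_w'(0)$ are not all equal and the quadratic terms survive — and (ii) a sieve-type count of how many distinct such slopes occur in the relevant window, namely $\asymp|\xi|^{s(1-\beta)}$ of them spread over a window of length $\asymp|\xi|^{1-\beta}$, together with a separate treatment of the "resonant" frequencies at which the phases of the outer sum nearly align. The positivity of the gain exponent demands that these counts beat the oscillation length, and it is exactly this balance that forces $s=\dim F_N>2/3$; certifying it with explicit constants, and checking that the distortion, discretisation, and quadratic-remainder errors are all of strictly lower order, is the technical heart of the proof.
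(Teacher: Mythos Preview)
The paper does not prove this statement. Kaufman's theorem is quoted verbatim from \cite{kaufman1980continued} as historical background and motivation for the paper's own results on \emph{self-similar} (affine) systems; no argument for it appears anywhere in the text. There is therefore no ``paper's own proof'' to compare your proposal against.

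As for the proposal on its own merits: Steps~1 and~2 are a faithful high-level description of the Kaufman set-up --- the conformal (Gibbs) measure with Gibbs and bounded-distortion estimates, the cylinder decomposition at a scale tied to $|\xi|$, and the crucial observation that the branches $f_w$ are genuinely non-linear M\"obius maps. Step~3, however, is too schematic to count as a proof sketch. The phrase ``van der Corput's inequality applied to the convexity $|f_w''|$'' followed by ``a recursion over dyadic frequency scales $|\xi|,|\xi|/K,\dots$ that gains a definite factor $\rho<1$ at each scale'' does not match how Kaufman's argument actually runs: there is no multiscale iteration of this kind. Rather, one passes (once) to a scale where the phases $\xi f_w'(0)$ fill out a large interval, and the decay comes from an exponential-sum estimate over the words $w$ combined with the $s$-regularity of $\mu_N$; the threshold $s>2/3$ arises from a single balancing of the cylinder count against the stationary-phase gain, not from compounding a per-scale factor $\rho$ over $\asymp\log|\xi|$ scales. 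Your ``main obstacle'' paragraph correctly identifies that term-by-term estimation fails and that a sieve/large-sieve count of slopes is needed, but the surrounding narrative about iterated scales would, if taken literally, produce logarithmic rather than polynomial decay. In short: right ingredients, wrong assembly in Step~3.
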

\begin{thm*}[Queff\'elec--Ramar\'e, 2003; {\cite[Th\'eor\`eme 1.4]{queffelec2003analyse}}]
    Let $\mathcal{A}\subset\mathbb{N}$ be
    finite. Suppose, $\dim{F_{\mathcal{A}}}>1/2$. Then, for any $\varepsilon>0$ and $1/2<\delta<\dim{F_{\mathcal{A}}}$, there exists a probability measure $\mu_{\mathcal{A},\varepsilon,\delta}$ on $F_{\mathcal{A}}$ such that as $\xi\to\pm\infty$,
    \begin{align*}
        \widehat{\mu_{\mathcal{A},\varepsilon,\delta}}(\xi)
        =O\left(|\xi|^{-\eta+\varepsilon}\right),
    \end{align*}
    where $\eta\coloneqq{\delta(2\delta-1)}/{((2\delta+1)(4-\delta))}>0$.
\end{thm*}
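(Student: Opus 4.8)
The plan is to realise $F_{\mathcal A}$ as the attractor of the conformal iterated function system formed by the inverse branches $\phi_a\colon x\mapsto 1/(a+x)$, $a\in\mathcal A$, of the Gauss map on $[0,1]$ (this set is compact since $\mathcal A$ is finite), and to take $\mu_{\mathcal A,\varepsilon,\delta}$ to be a suitable Gibbs measure for this system. Concretely, for a real parameter $s$ between $\delta$ and $\dim F_{\mathcal A}$ I would use the Ruelle transfer operator $\mathcal L_s f(x)=\sum_{a\in\mathcal A}|\phi_a'(x)|^{s}f(\phi_a(x))$; its leading eigendata (eigenvalue $\lambda_s$, positive eigenfunction $h_s$, eigenmeasure $\nu_s$) exist and obey the standard bounded-distortion estimates for this system, and $\mu:=h_s\nu_s$ can be arranged to have $\delta<\dim\mu<\dim F_{\mathcal A}$. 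For such a $\mu$ one has the regularity $\mu(I_{\mathbf w})\asymp q_{\mathbf w}^{-2\dim\mu}$ on continued-fraction cylinders $I_{\mathbf w}$, $\mathbf w=(a_1,\dots,a_n)\in\mathcal A^{n}$, where $p_{\mathbf w}/q_{\mathbf w}=[a_1,\dots,a_n]$ is the corresponding convergent and $|I_{\mathbf w}|\asymp q_{\mathbf w}^{-2}$; in particular the Frostman bound $\mu(I)\lesssim|I|^{\delta}$ holds for every interval $I$.

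Fix $\xi$ with $|\xi|$ large and a free parameter $\theta>1$. Let $\mathcal W$ be the stopping-time family of words $\mathbf w$ that are minimal subject to $q_{\mathbf w}\ge|\xi|^{\theta/2}$; since $\mathcal A$ is finite the denominators do not jump, so every $I_{\mathbf w}$ with $\mathbf w\in\mathcal W$ has length $\asymp|\xi|^{-\theta}$ and the $I_{\mathbf w}$ partition $F_{\mathcal A}$ up to a $\mu$-null set. On $I_{\mathbf w}$ the point $x$ lies within $q_{\mathbf w}^{-2}\asymp|\xi|^{-\theta}$ of $p_{\mathbf w}/q_{\mathbf w}$, so with $e(t):=e^{-2\pi i t}$,
\begin{align*}
    \widehat\mu(\xi)
    &=\sum_{\mathbf w\in\mathcal W}\mu(I_{\mathbf w})\,e\!\left(\xi\,\frac{p_{\mathbf w}}{q_{\mathbf w}}\right)+O\!\left(|\xi|\sum_{\mathbf w\in\mathcal W}\mu(I_{\mathbf w})\,q_{\mathbf w}^{-2}\right)\\
    &=\sum_{\mathbf w\in\mathcal W}\mu(I_{\mathbf w})\,e\!\left(\xi\,\frac{p_{\mathbf w}}{q_{\mathbf w}}\right)+O\!\left(|\xi|^{1-\theta}\right),
\end{align*}
so everything reduces to a cancellation bound for the weighted exponential sum $S(\xi):=\sum_{\mathbf w\in\mathcal W}\mu(I_{\mathbf w})\,e(\xi p_{\mathbf w}/q_{\mathbf w})$.

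To bound $S(\xi)$ I would group the words of $\mathcal W$ by the denominator $q:=q_{\mathbf w}$, which ranges over a dyadic window around $|\xi|^{\theta/2}$; since a rational has at most two finite continued-fraction expansions, the assignment $\mathbf w\mapsto(p_{\mathbf w},q_{\mathbf w})$ is at most $2$-to-$1$, and $S(\xi)$ is controlled by $\sum_q\bigl|\sum_{p}w_{p,q}\,e(\xi p/q)\bigr|$ with weights $w_{p,q}\asymp q^{-2\dim\mu}$. The device producing cancellation is to factor each $\mathbf w\in\mathcal W$ as a concatenation $\mathbf u\mathbf v$ at a suitably chosen intermediate length and to use the $\mathrm{SL}_2(\mathbb Z)$ matrix identity expressing $(p_{\mathbf w},q_{\mathbf w})$ bilinearly in the data of $\mathbf u$ and of $\mathbf v$: this decouples the sum, lets the inner block $\mathbf v$ vary essentially freely, and turns the inner sum into Gauss/Kloosterman-type exponential sums, which one estimates by completing the sums together with van der Corput's method (or Weyl differencing) and, in the $\xi$-aspect, a large-sieve inequality aided by the Frostman bound. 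The outcome is a bound $|S(\xi)|\lesssim|\xi|^{-\beta(\theta,\delta)}$; balancing $\beta(\theta,\delta)$ against the error exponent $\theta-1$, optimising over $\theta$, and letting $\dim\mu\uparrow\dim F_{\mathcal A}$ (absorbing the loss into $\varepsilon$) is precisely what produces the stated value $\eta=\delta(2\delta-1)/((2\delta+1)(4-\delta))$. Here $\delta>1/2$ is exactly the condition $2\delta-1>0$, i.e.\ what makes the square-root-type cancellation in the $q$-aspect outweigh the number of terms, while $\dim F_{\mathcal A}>1/2$ is what allows such a $\delta$ to exist; this is also where the bound improves on Kaufman's $\dim F_N>2/3$, whose treatment of this step is less efficient.

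The step I expect to be the main obstacle is the exponential-sum estimate for $S(\xi)$: one must extract genuine cancellation from the sums $\sum_p w_{p,q}\,e(\xi p/q)$ although the set of numerators $p$ realised by words of $\mathcal W$ need not comprise all residues modulo $q$ and the frequency $\xi$ is real rather than integral. Making the word-splitting argument quantitative — tracking the distortion constants precisely and controlling which residue classes actually occur — is the technical heart, whereas the thermodynamic construction of $\mu$, the Taylor-expansion step, and the final optimisation over $\theta$ are comparatively routine.
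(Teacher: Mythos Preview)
The paper does not prove this theorem: it is quoted in the introduction as a background result from \cite{queffelec2003analyse} and is never revisited, so there is no ``paper's own proof'' to compare your proposal against. The paper's own contributions concern self-similar measures (Theorems~1--4), not the continued-fraction sets $F_{\mathcal A}$.

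That said, your outline is a recognisable sketch of the Queff\'elec--Ramar\'e strategy: build a Gibbs/conformal measure via the transfer operator for the inverse branches of the Gauss map, linearise $\widehat\mu(\xi)$ on stopping-time cylinders, and reduce to an exponential sum over convergents that one attacks by splitting words and exploiting the $\mathrm{SL}_2(\mathbb Z)$ bilinear structure together with large-sieve/van der Corput input. Your identification of the exponential-sum step as the crux is accurate, and the roles you assign to the hypotheses $\delta>1/2$ and $\dim F_{\mathcal A}>1/2$ are correct. As written, however, the proposal is a plan rather than a proof: the derivation of the precise exponent $\eta=\delta(2\delta-1)/((2\delta+1)(4-\delta))$ from the balance of $\beta(\theta,\delta)$ against $\theta-1$ is asserted but not carried out, and the actual exponential-sum bound (which is the substance of the argument) is not stated, let alone proved. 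If you intend this as a self-contained proof you would need to supply that estimate and the optimisation explicitly.
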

The result of Queff\'elec--Ramar\'e improves upon that of Kaufman by generalising from \( F_N \) to \( F_{\mathcal{A}} \), providing faster decay rates with explicit parametrisation, and relying on a weaker threshold for the Hausdorff dimension. By the approximation of Hensley, \cite[Table 1]{hensley1996polynomial} or \cite[(4.10)]{HENSLEY1989182},
\begin{align*}
    1/2<\dim{F_2}=0.5312805\ldots<2/3;
\end{align*}
the result of Queff\'elec--Ramar\'e applies to more cases than that of Kaufman. Both results construct probability measures satisfying both conditions~\eqref{eq: Lyons assumption 0} and \eqref{eq: Lyons assumption 3}, where $\Phi$ exhibits power-law decay. The result of Lyons \cite[Theorem 3]{LyonsRussell1986Tmon} applies for these sets.

In the work of Li--Sahlsten \cite{li2022trigonometric}, self-similar probability measures on self-similar sets are studied. One of their results \cite[Theorem 1.3]{li2022trigonometric} focuses on non-singleton self-similar sets satisfying a non-Liouville condition. This result states that every self-similar probability measure $\mu$ on such sets satisfies both condition~\eqref{eq: Lyons assumption 0} and \eqref{eq: Lyons assumption 4} for some $\Phi$ with logarithmic decay.

\begin{defi}[Self-similar Set]
    Let $F\subset[0,1]$. $F$ is said to be a self-similar set if, there exist a finite set $\mathcal{A}$ and its collection of similitudes $f_w:[0,1]\to[0,1]$ such that
    \begin{align}
    \label{eq: SSS}
        F=\bigcup_{w\in\mathcal{A}}f_w(F),
    \end{align}
    where for any $w\in\mathcal{A}$, there exist a contraction ratio $r_w\in(0,1)$ and a translation $b_w\in[0,1]$ such that for any $x\in[0,1]$,
    \begin{align}
    \label{eq: IPS}
        f_w(x)
        = r_wx+b_w.
    \end{align}
\end{defi}
\begin{defi}[Self-similar Measure]
    Let $\mu$ be a probability measure on a subset of $[0,1]$. $\mu$ is said to be self-similar if, there exist a finite set $\mathcal{A}$ and its collection of similitudes \eqref{eq: IPS} such that
    \begin{align}
    \label{eq: SSM}
        \mu=\sum_{w\in\mathcal{A}}p_w\mu\circ {f_w}^{-1},
    \end{align}
    where the weights $(p_w)_{w\in\mathcal{A}}$ satisfy $\sum_{w\in\mathcal{A}}p_w=1$ and for any $w\in\mathcal{A}$, $p_w>0$.
\end{defi}
According to \cite[Theorem 3.1.(3)(i)]{hutchinson1981fractals}, for any finite collection of similitudes, there is a unique non-empty compact subset of $[0,1]$ satisfying the self-similarity property~\eqref{eq: SSS}. According to the Hutchinson Theorem \cite[Theorem 4.4.(1)(i)]{hutchinson1981fractals}, for any finite collection of similitudes and weights, there is a unique probability measure satisfying the self-similarity property~\eqref{eq: SSM}.

In Diophantine approximation, the concept of non-Liouville numbers (sometimes called diophantine numbers) is a natural generalisation of badly approximable numbers.
\begin{defi}[Non-Liouville Number]
Let $\theta\in\mathbb{R}$ and $l>0$. $\theta$ is said to be non-Liouville of degree $l$ if, there exists $c>0$ such that for any $p\in\mathbb{Z}$ and $q\in\mathbb{N}$,
\begin{align}\label{eq: non-Liouville}
    \left|\theta-\frac{p}{q}\right|\geq\frac{c}{q^l}.
\end{align}
\end{defi}
Badly approximable numbers are precisely the non-Liouville numbers of degree 2. By the Dirichlet Theorem \cite[Theorem 1.2]{beresnevich2016metricdiophantineapproximationaspects}, every non-Liouville number is of degree at least 2.

\begin{thm*}[Li--Sahlsten, 2022; {\cite[Theorem 1.3]{li2022trigonometric}}]
    Let $F\subset[0,1]$ be a self-similar set of the form \eqref{eq: SSS} associated with similitudes \eqref{eq: IPS} for a finite set $\mathcal{A}$.
    Suppose, $F$ is not a singleton, and there exist $j,k\in\mathcal{A}$ and $l>2$ such that $\log{r_j}/\log{r_k}$ is non-Liouville of degree $l$. Then, for any self-similar measure $\mu$ on $F$, there exists $\beta>0$ such that as $\xi\to\pm\infty$,
    \begin{align}
    \label{eq: Li-S Theorem 1.3}
        \widehat{\mu}(\xi)
        =O\left(\log^{-\beta}{|\xi|}\right).
    \end{align}
\end{thm*}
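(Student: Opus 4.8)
The plan is to convert the self-similarity of $\mu$ into a functional equation for $\widehat{\mu}$, reduce the theorem to a single recursive ``scale-reduction with gain'' inequality for the quantity $M(T)\coloneqq\sup_{e^{T}\le|\xi|\le e^{2T}}|\widehat{\mu}(\xi)|$, and then iterate that inequality. First I would record the functional equation: unwinding a Fourier transform through \eqref{eq: SSM} gives, for every $\xi\in\mathbb{R}$,
\begin{align*}
    \widehat{\mu}(\xi)=\sum_{w\in\mathcal{A}}p_w\,e^{-2\pi i\xi b_w}\,\widehat{\mu}(r_w\xi),
\end{align*}
and iterating $n$ times,
\begin{align*}
    \widehat{\mu}(\xi)=\sum_{u\in\mathcal{A}^{n}}p_u\,e^{-2\pi i\xi\tau_u}\,\widehat{\mu}(r_u\xi),
\end{align*}
where $p_u=\prod_{m\le n}p_{u_m}$, $r_u=\prod_{m\le n}r_{u_m}$, and $\tau_u\in[0,1]$ is the translation part of $f_{u_1}\circ\cdots\circ f_{u_n}$. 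Probabilistically, letting $(W_m)_{m\in\mathbb{N}}$ be i.i.d.\ of law $(p_w)_{w\in\mathcal{A}}$ and $R_n=\prod_{m\le n}r_{W_m}$, this reads $\widehat{\mu}(\xi)=\mathbb{E}[\,e^{-2\pi i\xi\tau_{W_1\cdots W_n}}\,\widehat{\mu}(R_n\xi)\,]$, and $S_n\coloneqq-\log R_n$ is a renewal process whose increments $-\log r_{W_m}$ are positive and bounded. The crude bound $|\widehat{\mu}(\xi)|\le\mathbb{E}[\,|\widehat{\mu}(R_n\xi)|\,]$ merely transports the supremum to a smaller scale with no gain, so the whole point is to harvest cancellation from the phases $e^{-2\pi i\xi\tau_u}$.

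The recursion I would aim for is: there exist constants $\gamma\in(0,\tfrac12)$, $\varepsilon\in(0,1)$ and $c>0$ such that, for all large $T$,
\begin{align*}
    M(T)\le(1-\varepsilon)\,M(\gamma T)+O(e^{-cT}).
\end{align*}
Granting this, I iterate along $T,\gamma T,\gamma^2 T,\dots$ until the scale falls below a fixed $T_0$, which takes $n$ steps with $n$ of order $\log_{1/\gamma}T$; since $M(\gamma^n T)\le1$ trivially and the accumulated errors sum to $O((1-\varepsilon)^n)$, one gets $M(T)=O((1-\varepsilon)^n)=O(T^{-\beta})$ with $\beta\coloneqq\log_{1/\gamma}\tfrac{1}{1-\varepsilon}>0$. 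As every sufficiently large $|\xi|$ lies in a band $[e^{T},e^{2T}]$ with $T$ comparable to $\log|\xi|$, this is exactly \eqref{eq: Li-S Theorem 1.3}.

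To prove the recursion, fix $|\xi|\in[e^{T},e^{2T}]$ and iterate the functional equation up to the renewal stopping time $N=N(\xi)$, the first $n$ with $|R_n\xi|<e^{2\gamma T}$. Because the increments are bounded, the overshoot is bounded, so $|R_N\xi|\in[e^{\gamma T},e^{2\gamma T}]$ once $T$ is large; by the strong Markov property, $\widehat{\mu}(\xi)=\sum_u p_u\,e^{-2\pi i\xi\tau_u}\,\widehat{\mu}(r_u\xi)$ over stopped cylinder words $u$, each term evaluating $\widehat{\mu}$ at a frequency in the band defining $M(\gamma T)$. The gain comes from pairing the stopped words: I would match them into pairs $\{u,u'\}$ agreeing off a short terminal block, arranged so that $r_u\xi$ and $r_{u'}\xi$ are close enough that $\widehat{\mu}(r_u\xi)$ and $\widehat{\mu}(r_{u'}\xi)$ nearly coincide, while the phase difference $\xi(\tau_u-\tau_{u'})$ is genuinely nonzero --- and here the non-singleton hypothesis on $F$ is precisely what makes such pairs available with $\tau_u\neq\tau_{u'}$ (if $F$ were a point then $|\widehat{\mu}|\equiv1$ and there is nothing to harvest). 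For such a pair, $|e^{-2\pi i\xi\tau_u}+e^{-2\pi i\xi\tau_{u'}}|\le 2-\varepsilon$ unless $\xi(\tau_u-\tau_{u'})$ lies very near $\mathbb{Z}$, so to secure a definite gain for a positive proportion of pairs uniformly over $|\xi|\in[e^{T},e^{2T}]$, one needs the finite family $\{\xi(\tau_u-\tau_{u'})\bmod 1\}$ to equidistribute at an effective rate. The phase differences $\tau_u-\tau_{u'}$ are built from products of contraction ratios along cylinders using the letters $j$ and $k$, so their logarithms are $\mathbb{Z}$-linear combinations of $\log r_j$ and $\log r_k$; the required effective equidistribution thus reduces to a Diophantine statement about $\log r_j/\log r_k$, and this is exactly where ``non-Liouville of degree $l$'' is used. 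That Diophantine input supplies the error $O(e^{-cT})$, and it only yields equidistribution strong enough to extract cancellation over a span of scales proportional to $T$ --- hence a reduction only to $\gamma T$ with $\gamma$ a \emph{constant} --- which forces on the order of $\log_{1/\gamma}T$ rounds and therefore the logarithmic, rather than polynomial, decay in \eqref{eq: Li-S Theorem 1.3}.

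I expect the main obstacle to be precisely this quantitative equidistribution step: making the cancellation in the pairing argument both uniform in the frequency $\xi$ and quantitative, with an explicit dependence on the non-Liouville exponent $l$, and then packaging it as the clean multiplicative loss $1-\varepsilon$. The remaining pieces --- the functional equation, the renewal stopping time with bounded overshoot, and the final deterministic iteration --- are routine bookkeeping; one also needs a short preliminary normalisation, using the non-singleton hypothesis together with the stability of the non-Liouville property under $\mathrm{GL}_2(\mathbb{Z})$-type substitutions, so that the oscillating cylinder maps can be chosen compatibly with the pair $j,k$.
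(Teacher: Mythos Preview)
Your overall strategy --- a scale recursion $M(T)\le(1-\varepsilon)M(\gamma T)+O(e^{-cT})$ iterated $\asymp\log_{1/\gamma}T$ times --- is a legitimate route to logarithmic decay, but the mechanism you propose for producing the gain $(1-\varepsilon)$ has a genuine gap. You want to pair stopped words $u,u'$ so that $r_u\xi$ and $r_{u'}\xi$ are close (hence $\widehat\mu(r_u\xi)\approx\widehat\mu(r_{u'}\xi)$) while $\xi(\tau_u-\tau_{u'})$ stays away from $\mathbb Z$. If $u,u'$ agree off the terminal letter, then indeed $\tau_u-\tau_{u'}=r_{u^-}(b_j-b_k)$ is a product of contraction ratios times a constant --- but then $r_u\xi-r_{u'}\xi=r_{u^-}\xi(r_j-r_k)$ is of the \emph{same order} as $r_u\xi$ itself, so the two frequencies are not close and you have no control on $\widehat\mu(r_u\xi)-\widehat\mu(r_{u'}\xi)$. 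If instead they differ at an early position, $\tau_u-\tau_{u'}$ is a complicated affine expression in the $b_w$'s and $r_w$'s and your Diophantine reduction no longer applies. Your assertion that the logarithms of the phase differences are ``$\mathbb Z$-linear combinations of $\log r_j$ and $\log r_k$'' is also not true in general: the prefix uses arbitrary letters of $\mathcal A$, so every $\log r_w$ appears. In short, the pairing step as written does not go through, and it is exactly the hard step.

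The paper (following Li--Sahlsten) avoids this difficulty by a device you do not use: it \emph{squares} and applies Cauchy--Schwarz to obtain
\[
|\widehat\mu(\xi)|^{2}\le\iint_{[0,1]^2}\sum_{w\in\mathcal W_t}p_w\,e^{-2\pi i\xi(f_w(x)-f_w(y))}\,\mu(\mathrm dx)\,\mu(\mathrm dy),
\]
and since $f_w(x)-f_w(y)=r_w(x-y)$ the translations $\tau_u$ disappear from the phase entirely. For fixed $(x,y)$ the inner sum is $\mathbb E[g_{(x-y)\xi e^{-t}}(S_{n_t}-t)]$ with $g_s(z)=\exp(-2\pi i s e^{-z})$, i.e.\ a functional of the renewal overshoot. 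The non-Liouville hypothesis on $\log r_j/\log r_k$ is used once, to show that the step law $\lambda=\sum_w p_w\delta_{-\log r_w}$ is $(2l-2)$-weakly Diophantine; this feeds a quantitative renewal theorem giving $\mathbb E[g_s(S_{n_t}-t)]=\sigma^{-1}\int g_s\,p+O(t^{-1/(8l-7)})\|g_s\|_{C^1}$, while the oscillatory main term decays like $|s|^{-1}$. Splitting the $(x,y)$-integral into a diagonal strip $|x-y|\le t^{-\kappa}$ (controlled by an upper regularity exponent $\alpha>0$, which exists by Feng--Lau precisely because $F$ is not a singleton) and its complement, and balancing the choice of $t\sim\log|\xi|$, yields $|\widehat\mu(\xi)|^2=O\bigl((\log|\xi|)^{-\alpha/((1+2\alpha)(8l-7))}\bigr)$ in one shot --- no recursion, no word-pairing, and no analysis of the translations at all.
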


Although \eqref{eq: Li-S Theorem 1.3} is sufficient for both conditions~\eqref{eq: Lyons assumption 0} and \eqref{eq: Lyons assumption 4}, the parameter $\beta$ is implicit, and its explicit dependence on the probability measure and the self-similar set remains undetermined. Determining an explicit value for $\beta$ is crucial for certain applications in the metric theory of Diophantine approximation. For instance, to ensure sufficient decay, the result of Pollington--Velani--Zafeiropoulos--Zorin (PVZZ) \cite[Theorem 1]{pollington2020inhomogeneous} requires $\beta>2$ as a a threshold. However, the implicit nature of $\beta$ in \eqref{eq: Li-S Theorem 1.3} prevents verification of whether the threshold can be met, limiting its applicability.

This paper establishes an explicit expression of $\beta$ in \eqref{eq: Li-S Theorem 1.3} for general self-similar probability measure $\mu$. The formula of $\beta$ is provided in terms of the upper regularity exponents of $\mu$ and the non-Liouville degree $l$ of the logarithmic ratio of the contraction ratios. The value of $\beta$ is maximised for self-similar sets satisfying the open set condition (a pairwise disjoint condition). An application is demonstrated for sets of numbers whose digits in their L\"uroth representations are restricted to a finite set, analogous to continued fraction representations.

\begin{defi}[Upper Regularity Exponent]
    Let $\mu$ be a probability measure on a subset of $[0,1]$ and $\alpha\in[0,1]$. $\alpha$ is said to be an upper regularity exponent of $\mu$ if, there exist $C_{\alpha}>0$ and $r_{\alpha}>0$ such that for any interval $I\subset [0,1]$, if $\operatorname{diam}{I}<r_\alpha$ then
    \begin{align}
        \label{eq: URE}
        \mu(I)\leq C_{\alpha}\left(\operatorname{diam}{I}\right)^\alpha,
    \end{align}
    where $\operatorname{diam}{I}$ denotes the diameter of $I$.
\end{defi}

Theorems~\ref{thm: 1}, \ref{thm: 2}, \ref{thm: 3}, and \ref{thm: 4} are the main results of this paper. Theorem~\ref{thm: 1} provides an explicit decay rate to refine and improve the previous result \cite[Theorem 1.3]{li2022trigonometric}. Theorem~\ref{thm: 2} provides a method for computing the Hausdorff dimension of self-similar sets satisfying the open set condition. Theorem~\ref{thm: 3} provides a probability measure achieving the fastest decay rate under Theorem~\ref{thm: 1}. Theorem~\ref{thm: 4} is an application of Theorem~\ref{thm: 3} to number theory.

\begin{thm}\label{thm: 1}
    Let $F\subset[0,1]$ be a self-similar set of the form \eqref{eq: SSS} associated with similitudes \eqref{eq: IPS} for a finite set $\mathcal{A}$.
    Suppose, there exist $j,k\in\mathcal{A}$ and $l\geq 2$ such that $\log{r_j}/\log{r_k}$ is non-Liouville of degree $l$. Then, for any self-similar probability measure $\mu$ on $F$ and any  upper regularity exponent $\alpha\in[0,1]$ of $\mu$, as $\xi\to\pm\infty$,
    \begin{align*}
        \widehat{\mu}(\xi)
        =O\left(\log^{-\alpha/(2(1+2\alpha)(8l-7))}{|\xi|}\right)
        .
    \end{align*}
\end{thm}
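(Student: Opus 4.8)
The plan is to follow the spectral-gap / renewal-theoretic strategy of Li--Sahlsten but to track every constant quantitatively, so that the implicit $\beta$ becomes the explicit exponent $\alpha/(2(1+2\alpha)(8l-7))$. First I would reduce to a self-similar measure $\mu$ whose defining IFS uses just the two maps $f_j,f_k$ with contraction ratios $r_j,r_k$: by self-similarity, $\widehat\mu(\xi)=\sum_{w}p_w e^{-2\pi i\xi b_w}\widehat\mu(r_w\xi)$, and iterating this identity produces, for any $n$, a representation $\widehat\mu(\xi)=\sum_{|\mathbf w|=n} p_{\mathbf w} e^{-2\pi i\xi b_{\mathbf w}}\widehat\mu(r_{\mathbf w}\xi)$ with $r_{\mathbf w}=\prod r_{w_i}$. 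The key point is that among the length-$n$ words one finds many with comparable contraction ratios but genuinely different translations $b_{\mathbf w}$, and the non-Liouville hypothesis on $\log r_j/\log r_k$ guarantees (via a Dirichlet-type pigeonhole on $\{m\log r_j + n\log r_k\}$) that one can realise $r_{\mathbf w}\approx t$ for a prescribed scale $t$ while keeping $n=|\mathbf w|$ of size only $O((\log(1/t))^{?})$ — here is exactly where the degree $l$ enters and ultimately produces the $8l-7$ in the denominator.

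Second, I would set up the oscillatory-sum estimate at a fixed frequency. Fix large $|\xi|$ and a scale $t$ to be optimised; write $\widehat\mu(\xi)$ as an average over length-$n$ words of $e^{-2\pi i \xi b_{\mathbf w}}\widehat\mu(r_{\mathbf w}\xi)$. Group the words by the value of $r_{\mathbf w}$ into a bounded number of classes; within a class the phases $\xi b_{\mathbf w}$ are spread out, and the trivial bound $|\widehat\mu(r_{\mathbf w}\xi)|\le 1$ is replaced by a gain coming from cancellation among the $e^{-2\pi i\xi b_{\mathbf w}}$. To quantify that cancellation I would use the upper regularity exponent: $\alpha$ controls $\mu$ of short intervals, hence controls $L^2$-type quantities such as $\int|\widehat\mu(\eta)|^2\,d\eta$ over dyadic blocks, giving $\int_{|\eta|\sim R}|\widehat\mu|^2 \ll R^{1-\alpha}$ (a Frostman/energy estimate). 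Combining this $L^2$ decay of $\widehat\mu$ with an $\ell^2\to\ell^\infty$ large-sieve-style inequality for the well-separated phase set $\{b_{\mathbf w}\}$ yields a bound of the shape $|\widehat\mu(\xi)|^2 \ll (\text{number of classes})\cdot(\text{per-class }L^2\text{ average})$, and feeding this back into itself (a bootstrap / self-improvement over $k$ iterations, as in Li--Sahlsten's Lemma on "decay at many scales") upgrades an a priori modulus-of-continuity bound for $\mu$ into genuine polynomial-in-$\log$ decay for $\widehat\mu$.

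Third comes the optimisation. The iteration loses a factor at each step that depends on how many word-classes are needed to hit a target scale, which by step one is governed by $l$; balancing the number of iterations against the scale $t$ and against the $\xi$-dependence $\log|\xi|\sim$ (number of scales available) produces the exponent $\alpha/(2(1+2\alpha)(8l-7))$. The factor $1+2\alpha$ in the denominator is the signature of converting an $L^2$ energy bound ($R^{1-\alpha}$) into a pointwise bound via interpolation with the trivial bound; the $2$ out front is from passing between $|\widehat\mu|^2$ and $|\widehat\mu|$; and $8l-7$ is the arithmetic cost of the Diophantine pigeonhole. When the open set condition holds one can take $\alpha=\dim F$ and the bound is optimised, which is what Theorem~\ref{thm: 3} will exploit.

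The main obstacle I expect is step one done \emph{effectively}: turning the qualitative non-Liouville statement $|\log r_j/\log r_k - p/q|\ge c q^{-l}$ into a clean, quantitative count of length-$n$ words $\mathbf w$ with $r_{\mathbf w}$ in a prescribed narrow window and with controlled multiplicity, and then making sure the resulting loss is exactly a power of $\log|\xi|$ rather than something worse. Everything else — the energy estimate from the regularity exponent, the large-sieve separation bound, and the bootstrap — is, in principle, a matter of bookkeeping the constants $C_\alpha, r_\alpha$ through the Li--Sahlsten machine; the delicate part is that the bootstrap must be iterated a number of times that itself grows with $\log|\xi|$, so I would need to control how the constants accumulate across a growing number of iterations and verify they contribute only to the implied $O$-constant, not to the exponent.
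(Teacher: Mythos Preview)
Your proposal names the Li--Sahlsten renewal-theoretic strategy but then abandons it for a quite different toolkit (large sieve, $L^2$ Frostman energy, multi-scale bootstrap). The paper's proof is much more direct, and the mechanisms producing the constants are not the ones you guess.

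The paper squares $\widehat\mu$ via Proposition~\ref{prop: refined Li-S Lemma 3.1} to obtain
\[
|\widehat\mu(\xi)|^2 \le \iint_{[0,1]^2}\sum_{w\in\mathcal W_t}p_w\,e^{-2\pi i\xi(f_w(x)-f_w(y))}\,\mu(\mathrm dx)\,\mu(\mathrm dy),
\]
and splits the square only once, into a thin diagonal strip $A_t=\{|x-y|\le\delta\}$ and its complement. On $A_t$ the regularity exponent is used \emph{trivially}: the integrand has modulus one, so the contribution is at most $\mu\!\otimes\!\mu(A_t)\ll\delta^\alpha$. No $L^2$ energy, no interpolation. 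Off the diagonal, the paper observes that the inner sum is exactly $\mathbb E\bigl(g_{(x-y)s}(S_{n_t}-t)\bigr)$ for the residual-life random variable of the random walk with step law $\lambda=\sum_w p_w\delta_{-\log r_w}$, where $s=\xi e^{-t}$. The non-Liouville hypothesis is used only to show $\lambda$ is $(2l-2)$-weakly diophantine (Proposition~\ref{prop: refined Li-S Lemma 3.4}); the quantitative renewal theorem (Proposition~\ref{prop: refined Li-S Proposition 2.2}) then gives
\[
\mathbb E\bigl(g_{(x-y)s}(S_{n_t}-t)\bigr)=\tfrac1\sigma\int g_{(x-y)s}(z)p(z)\,\mathrm dz+O\bigl(t^{-1/(4(2l-2)+1)}\bigr)\,|(x-y)s|,
\]
and the main oscillatory integral is $O(1/|(x-y)s|)$. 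Thus $8l-7=4(2l-2)+1$ comes from the renewal error rate, not from a Diophantine pigeonhole; and $1+2\alpha$ comes from the elementary optimisation balancing $\delta^\alpha$, $1/(\delta|s|)$, and $|s|/t^{1/(8l-7)}$, not from $L^2$-to-$L^\infty$ interpolation. There is no iteration at all; the final factor $2$ is just the square root.

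Your approach has genuine gaps beyond being different. The ``reduction to the two maps $f_j,f_k$'' is not valid: $\mu$ is built from all of $\mathcal A$, and dropping maps changes the measure. The bootstrap you sketch is never specified---what quantity is being iterated, and why would the accumulated losses stay in the implied constant rather than the exponent? Most importantly, nothing in your outline actually produces the numbers $8l-7$ or $1+2\alpha$; your explanations for them are post-hoc labels attached to mechanisms that are not the ones at work. If you want to reach the stated exponent, you need the renewal theorem (or an equivalent equidistribution statement for $S_{n_t}-t$) and the diagonal/off-diagonal split, not a large-sieve iteration.
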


According to the Frostman Lemma \cite[Theorem 8.8]{MattilaPertti1995Gosa}, for any probability measure $\mu$ on $F$, every upper regularity exponent $\alpha$ of $\mu$ is at most $\dim{F}$, the Hausdorff dimension of $F$. In the case of $F$ being a singleton, $\dim{F}=0$ implies that the only upper regularity exponent is $\alpha=0$. Theorem~\ref{thm: 1} remains consistent in degenerate cases by providing the trivial non-decay rate, after the non-singleton assumption is removed from the result of Li--Sahlsten {\cite[Theorem 1.3]{li2022trigonometric}}.

In the case of including the non-singleton assumption, the result of Feng--Lau \cite[Proposition 2.2]{FENG2009407} ensures that for any self-similar probability measure, a positive upper regularity exponent exists. Theorem~\ref{thm: 1} guarantees the decay of its Fourier transform in non-degenerate cases.

In the case of including the open set condition (a pairwise disjoint condition), a self-similar probability measure $\mu_{\mathcal{A}}$ can be constructed, so that the maximal upper regularity exponent equals $\dim{F}$. This measure maximises the decay rate described in Theorem~\ref{thm: 1}.

\begin{thm}\label{thm: 2}
    Let $F\subset[0,1]$ be a self-similar set of the form \eqref{eq: SSS} associated with similitudes \eqref{eq: IPS} for a non-empty finite set $\mathcal{A}$.
    Suppose, the images $(f_w([0,1]))_{w\in\mathcal{A}}$ are pairwise disjoint except at the endpoints.
    Then, $\dim{F}$ is the unique solution $s=s_{\mathcal{A}}$ in $[0,1]$ to the following equation:
    \begin{align}
    \label{eq: sum rw s =1}
        \sum_{w\in\mathcal{A}}{r_w}^{s}=1.
    \end{align}
\end{thm}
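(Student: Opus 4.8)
The plan is to establish the two inequalities $\dim{F}\le s_{\mathcal A}$ and $\dim{F}\ge s_{\mathcal A}$, after first checking that \eqref{eq: sum rw s =1} determines a unique $s_{\mathcal A}\in[0,1]$. For the latter, observe that $g(s)\coloneqq\sum_{w\in\mathcal A}r_w^{\,s}$ is continuous and strictly decreasing on $[0,\infty)$ since each $r_w\in(0,1)$, with $g(0)=|\mathcal A|\ge 1$ and $g(s)\to 0$ as $s\to+\infty$; hence there is a unique $s_{\mathcal A}\ge 0$ with $g(s_{\mathcal A})=1$. Because the intervals $f_w([0,1])$ have length $r_w$, lie in $[0,1]$, and are pairwise disjoint apart from endpoints, $\sum_{w\in\mathcal A}r_w\le 1$, i.e.\ $g(1)\le 1=g(s_{\mathcal A})$, so monotonicity forces $s_{\mathcal A}\le 1$. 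In passing this also gives the consistency checks (e.g.\ $s_{\mathcal A}=0$ exactly when $|\mathcal A|=1$, i.e.\ when $F$ is a singleton).

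For the upper bound I would use the natural covers by cylinders. For a word $u=w_1\cdots w_n\in\mathcal A^n$ set $f_u\coloneqq f_{w_1}\circ\cdots\circ f_{w_n}$, a similitude of ratio $r_u\coloneqq\prod_{i=1}^n r_{w_i}$; iterating \eqref{eq: SSS} gives $F=\bigcup_{u\in\mathcal A^n}f_u(F)\subseteq\bigcup_{u\in\mathcal A^n}f_u([0,1])$, a cover by intervals of diameter $r_u\le(\max_w r_w)^n$. Since $\sum_{u\in\mathcal A^n}(\operatorname{diam}f_u([0,1]))^{s_{\mathcal A}}=g(s_{\mathcal A})^n=1$ and the mesh of this cover tends to $0$, the $s_{\mathcal A}$-dimensional Hausdorff measure of $F$ is at most $1$, whence $\dim{F}\le s_{\mathcal A}$. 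This step uses no disjointness.

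For the lower bound I would invoke the Frostman Lemma \cite[Theorem 8.8]{MattilaPertti1995Gosa} quoted above directly: it suffices to produce a probability measure on $F$ having $s_{\mathcal A}$ as an upper regularity exponent. The candidate is the self-similar measure $\mu_{\mathcal A}$ with weights $p_w\coloneqq r_w^{\,s_{\mathcal A}}$, which by \eqref{eq: sum rw s =1} are positive and sum to $1$, so \cite[Theorem 4.4]{hutchinson1981fractals} applies. A short induction propagates the hypothesis from generation $1$: the intervals $(f_u([0,1]))_{u\in\mathcal A^n}$, and likewise the mixed-generation ``stopping family'' over $\mathcal S_\delta\coloneqq\{u=w_1\cdots w_n:r_u\le\delta<r_{w_1\cdots w_{n-1}}\}$, have pairwise disjoint interiors. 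Since any point of $[0,1]$ lies in at most two intervals of such a family, iterating the self-similarity relation for $\mu_{\mathcal A}$ shows $\mu_{\mathcal A}$ is non-atomic and hence $\mu_{\mathcal A}(f_u(F))=p_u=r_u^{\,s_{\mathcal A}}$ for every word $u$ (the endpoint overlaps being $\mu_{\mathcal A}$-null). Given an interval $I$ with $\delta\coloneqq\operatorname{diam}I$ small, the cylinders of $\mathcal S_\delta$ cover $F$ and each has length $\ge(\min_w r_w)\delta$; disjoint subintervals of $\mathbb R$ of that length meeting $I$ number at most a constant $C=C(\min_w r_w)$, so
\begin{align*}
    \mu_{\mathcal A}(I)=\mu_{\mathcal A}(F\cap I)\le\sum_{\substack{u\in\mathcal S_\delta\\ f_u([0,1])\cap I\ne\emptyset}}\mu_{\mathcal A}(f_u(F))=\sum r_u^{\,s_{\mathcal A}}\le C\,\delta^{s_{\mathcal A}}.
\end{align*}
Thus $s_{\mathcal A}$ is an upper regularity exponent of $\mu_{\mathcal A}$, and the Frostman Lemma yields $\dim{F}\ge s_{\mathcal A}$; combined with the upper bound this gives $\dim{F}=s_{\mathcal A}$.

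The routine-looking but genuinely load-bearing part is the lower bound: one must (i) propagate ``disjoint except at endpoints'' from single maps to all generations and to the stopping families $\mathcal S_\delta$; (ii) justify the identity $\mu_{\mathcal A}(f_u(F))=p_u$, which requires discarding the endpoint overlaps and is legitimate only because $\mu_{\mathcal A}$ is non-atomic (so one uses $|\mathcal A|\ge 2$ when $F$ is not a singleton); and (iii) carry out the bounded-overlap count of comparable-size cylinders against an arbitrary interval. The degenerate case $|\mathcal A|=1$, where $F$ is a single point and $s_{\mathcal A}=0$, is trivial and treated separately.
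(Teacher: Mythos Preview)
Your proof is correct and follows essentially the same strategy as the paper: existence and uniqueness of $s_{\mathcal A}$ via monotonicity, the upper bound via the natural cylinder covers, and the lower bound by constructing the self-similar measure with weights $p_w=r_w^{\,s_{\mathcal A}}$ and verifying it is $s_{\mathcal A}$-regular. The only cosmetic differences are that the paper establishes the regularity estimate (its Proposition~\ref{prop: Holder}) by selecting the maximal cylinder containing $I$ rather than your Moran stopping family $\mathcal S_\delta$, and then runs the mass distribution principle for each $s<s_{\mathcal A}$ rather than invoking Frostman at $s=s_{\mathcal A}$ directly.
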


\begin{thm}\label{thm: 3}
    Let $F\subset[0,1]$ be a self-similar set of the form \eqref{eq: SSS} associated with similitudes \eqref{eq: IPS} for a finite set $\mathcal{A}$.
    Suppose, the images $(f_w([0,1]))_{w\in\mathcal{A}}$ are pairwise disjoint except at the endpoints, and there exist $j,k\in\mathcal{A}$ and $l\geq 2$ such that $\log{r_j}/\log{r_k}$ is non-Liouville of degree $l$.
    Then, there exists a self-similar probability measure $\mu_{\mathcal{A}}$ on $F$ such that as $\xi\to\pm\infty$,
    \begin{align*}
        \widehat{\mu_{\mathcal{A}}}(\xi)
        =O\left(\log^{-\dim{F}/(2(1+2\dim{F})(8l-7))}{|\xi|}\right)
        .
    \end{align*}
\end{thm}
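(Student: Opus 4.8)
The plan is to deduce Theorem~\ref{thm: 3} from Theorem~\ref{thm: 1} by producing a self-similar probability measure on $F$ whose largest upper regularity exponent equals $\dim F$. First I would record that the function $g(\alpha)\coloneqq\alpha/(2(1+2\alpha)(8l-7))$ occurring in Theorem~\ref{thm: 1} is strictly increasing on $[0,1]$ for each fixed $l\ge 2$, since $\alpha\mapsto\alpha/(1+2\alpha)$ has positive derivative $(1+2\alpha)^{-2}$. By the Frostman Lemma \cite[Theorem 8.8]{MattilaPertti1995Gosa}, every upper regularity exponent of a probability measure on $F$ is at most $\dim F$; hence the fastest decay rate that Theorem~\ref{thm: 1} can yield over all self-similar measures on $F$ is $O(\log^{-g(\dim F)}|\xi|)$, and it is attained as soon as some self-similar measure admits $\alpha=\dim F$ as an upper regularity exponent. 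It therefore suffices to construct such a measure and then invoke Theorem~\ref{thm: 1}, whose non-Liouville hypothesis coincides with that of Theorem~\ref{thm: 3}.

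Since the images $(f_w([0,1]))_{w\in\mathcal{A}}$ are pairwise disjoint except at the endpoints, Theorem~\ref{thm: 2} gives $\dim F=s_{\mathcal{A}}$, the unique $s\in[0,1]$ with $\sum_{w\in\mathcal{A}}r_w^{\,s}=1$. I would then take the weights $p_w\coloneqq r_w^{\,s_{\mathcal{A}}}$; these are positive and sum to $1$, so by the Hutchinson Theorem \cite[Theorem 4.4.(1)(i)]{hutchinson1981fractals} there is a unique self-similar probability measure $\mu_{\mathcal{A}}$ on $F$ with weights $(p_w)_{w\in\mathcal{A}}$. If $F$ is a singleton the claimed estimate is the trivial $O(1)$ bound (here $\dim F=0$), so I may assume $F$ is not a singleton, whence $\mu_{\mathcal{A}}$ is non-atomic.

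The heart of the argument is to show that $s_{\mathcal{A}}$ is an upper regularity exponent of $\mu_{\mathcal{A}}$. For a finite word $u=u_1\cdots u_n$ put $f_u\coloneqq f_{u_1}\circ\cdots\circ f_{u_n}$, so that $f_u([0,1])$ is an interval of length $\prod_{i=1}^n r_{u_i}$; iterating the self-similarity relation \eqref{eq: SSM} and using non-atomicity to discard endpoint overlaps gives $\mu_{\mathcal{A}}(f_u([0,1]))=\prod_{i=1}^n p_{u_i}=\bigl(\prod_{i=1}^n r_{u_i}\bigr)^{s_{\mathcal{A}}}=(\operatorname{diam}f_u([0,1]))^{s_{\mathcal{A}}}$. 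Now fix a short interval $I\subset[0,1]$ of diameter $\rho<1$, let $r_{\min}\coloneqq\min_{w\in\mathcal{A}}r_w$, and let $\mathcal{S}$ be the set of finite words $u$ that are minimal subject to $\operatorname{diam}f_u([0,1])\le\rho$. Every infinite word over $\mathcal{A}$ has a unique prefix in $\mathcal{S}$, so $F\subset\bigcup_{u\in\mathcal{S}}f_u([0,1])$; moreover distinct words in $\mathcal{S}$ are incomparable, so the intervals $(f_u([0,1]))_{u\in\mathcal{S}}$ have pairwise disjoint interiors, and each has length greater than $r_{\min}\rho$ (compare with its parent). Any $f_u([0,1])$ meeting $I$ lies in the $\rho$-neighbourhood of $I$, an interval of length $3\rho$; since such intervals have disjoint interiors and lengths exceeding $r_{\min}\rho$, fewer than $3/r_{\min}$ of them can meet $I$. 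Summing the previous display over those words gives $\mu_{\mathcal{A}}(I)\le(3/r_{\min})\,\rho^{s_{\mathcal{A}}}$, so $s_{\mathcal{A}}=\dim F$ is indeed an upper regularity exponent of $\mu_{\mathcal{A}}$. Applying Theorem~\ref{thm: 1} to $\mu_{\mathcal{A}}$ with $\alpha=\dim F$ then yields the asserted bound.

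I expect the only genuine obstacle to be this bounded-overlap step: one must check carefully that the stopping-time words $\mathcal{S}$ produce cylinders that cover $F$ up to endpoints and whose lengths are all comparable to $\rho$, which is exactly where both the pairwise-disjointness hypothesis and the bound $r_{\min}>0$ enter. The monotonicity of $g$ and the reduction to Theorem~\ref{thm: 1} are then routine.
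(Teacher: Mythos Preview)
Your proposal is correct and follows essentially the same route as the paper: construct the self-similar measure with weights $p_w=r_w^{\,s_{\mathcal{A}}}$, verify that $\dim F=s_{\mathcal{A}}$ is an upper regularity exponent, and then apply Theorem~\ref{thm: 1} with $\alpha=\dim F$. The only cosmetic difference is that the paper packages the regularity estimate as Proposition~\ref{prop: Holder} (via a maximal-containing-cylinder argument), whereas you prove it inline with a stopping-time cover and a bounded-overlap count; both are standard and yield the same constant up to a harmless factor.
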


Theorem~\ref{thm: 2} is essentially the same as the classical result of Moran \cite[Theorem II]{Moran_1946}. Different from the original approach of Moran, which constructs additive functions and applies density lemmas, the proof of Theorem~\ref{thm: 2} employs a novel approach. It first constructs a series of self-similar probability measures and then selects the one maximising its upper regularity exponent. This measure is sufficient to deduce Theorem~\ref{thm: 3} from Theorem~\ref{thm: 1}.

Theorems~\ref{thm: 2} and \ref{thm: 3} can be applied together. By equation~\eqref{eq: sum rw s =1}, the Hausdorff dimension of the self-similar set can be computed or approximated numerically. Once the non-Liouville degree is known, the decay rate described in Theorem \ref{thm: 3} can be calculated. 

Theorem~\ref{thm: 3} provides a framework for number theorists studying sets of numbers with self-similar structures. One application lies in the study of sets of numbers whose digits in their L\"uroth representation are restricted to a finite set. This result is an analogous to the work of Queff\'elec--Ramar\'e \cite{queffelec2003analyse} on restricted partial quotients.

For any $x\in(0,1]$, there exists a unique sequence $(d_n)_{n\in\mathbb{N}}$ of positive integers not equal to 1, referred to as digits, such that
\begin{align}
    x
    &=[d_1,d_2,\ldots,d_n,\ldots] \label{eq: Luroth Repr}\\
    &\coloneqq\sum_{k=1}^{\infty}\frac{1}{d_k\prod_{j=1}^{k-1}d_j(d_j-1)}; \label{eq: Luroth Expa}
\end{align}
where equation~\eqref{eq: Luroth Repr} is referred to as the L\"uroth representation of $x$, and equation~\eqref{eq: Luroth Expa} as the L\"uroth expansion of $x$. The notation $[d_1,d_2,\ldots,d_n,\ldots]$ denotes the L\"uroth representation when the brackets contain $d$'s instead of $a$'s, distinguishing it from the continued fraction representation $[a_1, a_2, \ldots,a_n,\ldots]$, 

Define, for any $\mathcal{A}\subset\mathbb{N}\setminus\{1\}$, $L_{\mathcal{A}}$ to be the sets of numbers whose digits in their L\"uroth representations are restricted to $\mathcal{A}$. Formally,
\begin{align*}
    L_{\mathcal{A}}
    \coloneqq\left\{[d_1,d_2,\ldots,d_n,\ldots]:\text{$d_n\in\mathcal{A}$ for all $n\in\mathbb{N}$}\right\}.
\end{align*}
For any finite $\mathcal{A}\subset\mathbb{N}\setminus\{1\}$, $L_{\mathcal{A}}$ is self-similar in $(0,1]$. Specifically, $L_{\mathcal{A}}$ can be expressed as
\begin{align}
\label{eq: Luroth SSS}
    L_{\mathcal{A}}=\bigcup_{d\in\mathcal{A}}f_d\left(L_{\mathcal{A}}\right),
\end{align}
where for any $d\in\mathbb{N}\setminus\{1\}$, the similitude $f_d:(0,1]\to(0,1]$ is defined by, for any $x\in(0,1]$,
\begin{align}
\label{eq: Luroth IFS}
    f_d(x)\coloneqq\frac{1}{d}+\frac{1}{d(d-1)}x,
\end{align}
derived from the following recursion formula: for any sequence $(d_n)_{n\in\mathbb{N}}$ in $\mathbb{N}\setminus\{1\}$,
\begin{align*}
    [d_1,d_2,d_3,\ldots]=\frac{1}{d_1}+\frac{1}{d_1(d_1-1)}[d_2,d_3,d_4,\ldots].
\end{align*}

Figure~\ref{fig: L3} illustrates the fractal construction process of $L_3\coloneqq L_{\{2,3\}}$. Starting from the interval $(0,1]$, sub-intervals are partitioned and removed iteratively at each level. At the first level, $(0,1]$ is partitioned to three sub-intervals: $(0,1/3]$, $(1/3,1/2]$, and $(1/2,1]$, which correspond to numbers in $(0,1]$ with $d_1\geq4$, $d_1=3$, and $d_1=2$ respectively. The first sub-interval is removed, while the second and third are retained for the next level. At each subsequent level $n\in\mathbb{N}$, every retained interval is further partitioned to three sub-intervals, with diameters in the ratio of 2$:$1$:$3, corresponding to $d_n\geq4$, $d_n=3$, and $d_n=2$ respectively. As before, the first sub-interval is removed, and the remaining two are retained to continue the process. The set $L_3$ is the countable intersection of all levels in this iterative construction.

\input{figure_L3_1127}

The non-Liouville condition in Theorem~\ref{thm: 3} can be proved by applying the explicit Baker Theorem of Matveev \cite[Theorem 2.1]{matveev2000explicit} for any non-singleton $\mathcal{A}\subset\mathbb{N}\setminus\{1\}$. Theorem~\ref{thm: 4} is established as an application of Theorem~\ref{thm: 3} to number theory. 

\begin{thm}\label{thm: 4}
    For any non-singleton finite $\mathcal{A}\subset\mathbb{N}\setminus\{1\}$, there exists a self-similar probability measure $\mu_{\mathcal{A}}$ on $L_{\mathcal{A}}$ such that as $\xi\to\pm\infty$,
    \begin{align*}
        \widehat{\mu_{\mathcal{A}}}(\xi)=O\left(\log^{-\beta_{\mathcal{A}}}{|\xi|}\right),
    \end{align*}
    where $a_1$ and $a_2$ are the first smallest and second smallest elements in $\mathcal{A}$ respectively, and
    \begin{align*}
        \beta_{\mathcal{A}}
        \coloneqq
        \frac{\dim{L_\mathcal{A}}}{1+2\dim{L_\mathcal{A}}}\frac{10^{-10}}{\log{a_1}\log{a_2}+1}
        >0
        .
    \end{align*}
\end{thm}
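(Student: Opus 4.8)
The plan is to obtain Theorem~\ref{thm: 4} as a direct consequence of Theorem~\ref{thm: 3} applied to the self-similar set $L_{\mathcal{A}}$; the only substantial work is to furnish the non-Liouville hypothesis with an \emph{explicit} degree and then to compare the exponent this produces with $\beta_{\mathcal{A}}$.

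\emph{Recognising $L_{\mathcal{A}}$ as an admissible self-similar set.} First I would check that $L_{\mathcal{A}}$, together with the maps $f_d$ of \eqref{eq: Luroth IFS}, meets the hypotheses of Theorem~\ref{thm: 3}. Each $f_d$ is a similitude of $[0,1]$ with contraction ratio $r_d = 1/(d(d-1)) \in (0,1)$ and translation $b_d = 1/d$; and $L_{\mathcal{A}}$, being the image of the compact product space $\mathcal{A}^{\mathbb{N}}$ under the expansion map \eqref{eq: Luroth Expa} (uniformly convergent since its $k$-th term is at most $2^{-k}$), is compact, and by \eqref{eq: Luroth SSS} it is the attractor of $(f_d)_{d\in\mathcal{A}}$. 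Since $f_d([0,1]) = [\,1/d,\,1/(d-1)\,]$, two distinct images can meet only in the common endpoint $1/d$ (when $d$ and $d+1$ both lie in $\mathcal{A}$), so the images are pairwise disjoint except at endpoints. It remains to produce $j,k\in\mathcal{A}$ and $l\ge 2$ with $\log r_j/\log r_k$ non-Liouville of degree $l$.

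\emph{The non-Liouville degree from Matveev's theorem.} I would take $j=a_1$, $k=a_2$, so $\log r_j/\log r_k = \log(a_1(a_1-1))/\log(a_2(a_2-1))$; call this ratio $\theta$, and note $0<\theta<1$ since $a_1<a_2$. For irrationality: a relation $\theta = p/q$ with coprime $p,q\in\mathbb{N}$ gives $(a_1(a_1-1))^q = (a_2(a_2-1))^p$, which by unique factorisation makes $a_1(a_1-1)$ a perfect $p$-th power and $a_2(a_2-1)$ a perfect $q$-th power; but a product of two consecutive integers $\ge 2$ is not a perfect $k$-th power for any $k\ge 2$ (the two coprime factors would each be a $k$-th power, impossible for consecutive positive integers), so $p=q=1$ and $a_1(a_1-1)=a_2(a_2-1)$, contradicting $a_1\ne a_2$. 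For the quantitative statement, fix $p\in\mathbb{Z}$ and $q\in\mathbb{N}$; if $|\theta-p/q|>\tfrac12$ the desired inequality is trivial, so assume $1\le p\le 2q$. Then the linear form $\Lambda = q\log(a_1(a_1-1)) - p\log(a_2(a_2-1))$ is nonzero, and Matveev's explicit bound \cite[Theorem~2.1]{matveev2000explicit} — applied with $n=2$, field degree $D=1$, heights $h(a_i(a_i-1)) = \log(a_i(a_i-1)) \le 2\log a_i$, and $B = \max(|p|,|q|)\le 2q$ — gives
\begin{align*}
    \log|\Lambda| \ \ge\ -C_{\mathrm{Mat}}\,\log(a_1(a_1-1))\,\log(a_2(a_2-1))\,(1+\log q)
\end{align*}
for an explicit absolute constant $C_{\mathrm{Mat}}$. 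Dividing by $q\log(a_2(a_2-1))$, and decreasing the implied constant to absorb the finitely many small $q$, gives an explicit $c>0$ with $|\theta-p/q|\ge c\,q^{-l}$ for
\begin{align*}
    l \ =\ 1 + C_{\mathrm{Mat}}\,\log(a_1(a_1-1))\,\log(a_2(a_2-1)) \ \le\ C'\,(\log a_1\log a_2+1),
\end{align*}
an explicit absolute $C'$. Hence $\theta$ is non-Liouville of this explicit degree $l\ge 2$, and Theorem~\ref{thm: 3} applies to $L_{\mathcal{A}}$.

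\emph{Comparing exponents and concluding.} Theorem~\ref{thm: 3} now yields a self-similar probability measure $\mu_{\mathcal{A}}$ on $L_{\mathcal{A}}$ with $\widehat{\mu_{\mathcal{A}}}(\xi) = O(\log^{-\gamma}|\xi|)$ for $\gamma = \dim L_{\mathcal{A}}/(2(1+2\dim L_{\mathcal{A}})(8l-7))$. Since $8l-7\le 8C'(\log a_1\log a_2+1)$, this gives $\gamma \ge \tfrac{\dim L_{\mathcal{A}}}{1+2\dim L_{\mathcal{A}}}\cdot\tfrac{1}{16C'(\log a_1\log a_2+1)}$, and the factor $10^{-10}$ in $\beta_{\mathcal{A}}$ is chosen precisely so that $1/(16C')\ge 10^{-10}$, whence $\gamma\ge\beta_{\mathcal{A}}$; since $\log^{-\gamma}|\xi|\le\log^{-\beta_{\mathcal{A}}}|\xi|$ for all large $|\xi|$ when $\gamma\ge\beta_{\mathcal{A}}>0$, the stated decay follows. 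Finally $\beta_{\mathcal{A}}>0$ because, by Theorem~\ref{thm: 2}, $\dim L_{\mathcal{A}}$ solves $\sum_{d\in\mathcal{A}}r_d^s=1$ and is positive, the left side at $s=0$ being $|\mathcal{A}|\ge 2>1$. The one genuinely delicate point is the bookkeeping in this last chain: tracking the numerical constants in Matveev's theorem (for $n=2$, $D=1$, and real logarithms) carefully enough to certify that the admissible $C'$ is small enough for the comparison to close. Everything else is routine.
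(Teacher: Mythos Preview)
Your proposal is correct and follows essentially the same route as the paper: verify the open-set and self-similarity hypotheses for $L_{\mathcal{A}}$, prove irrationality of $\log(a_1(a_1-1))/\log(a_2(a_2-1))$ via the ``consecutive integers cannot both be perfect powers'' argument, invoke Matveev's explicit Baker-type bound to obtain a non-Liouville degree of the form $1+C_{\mathrm{Mat}}\log(a_1(a_1-1))\log(a_2(a_2-1))$, feed this into Theorem~\ref{thm: 3}, and then check numerically that the resulting exponent dominates $\beta_{\mathcal{A}}$. The paper packages these steps as Propositions~\ref{prop: Luroth linearly independent}, \ref{prop: Luroth SSL}, and \ref{prop: 10}, and you are right that the only genuinely delicate point is the final constant-tracking (in the paper's notation, verifying $\beta_{\mathcal{A},0}\ge\beta_{\mathcal{A}}$).
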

By Theorem~\ref{thm: 2}, the Hausdorff dimension of $L_3$ is approximated as
\begin{align*}
    \dim{L_3}
    =0.60096685161367548572
    \ldots
    .
\end{align*}
By Theorem~\ref{thm: 4}, there exists a probability measure $\mu_3$ on $L_3$ such that as $\xi\to\pm\infty$,
\begin{align*}
    \widehat{\mu_3}(\xi)=O\left(\log^{-\beta_3}{|\xi|}\right),
\end{align*}
where $\beta_3>10^{-11}>0$.

Returning to the motivation of establishing an explicit expression for $\beta$ in \eqref{eq: Li-S Theorem 1.3}, the result of PVZZ \cite[Theorem 1]{pollington2020inhomogeneous} sets a significant target by requiring $\beta>2$ as a threshold in its assumption. Under the most favourable conditions, Theorem~\ref{thm: 3}, which achieves the fastest decay rate derived from Theorem~\ref{thm: 1}, yields $\beta=1/54$. This value reflects inherent constraints, such as the Hausdorff dimension of subsets in $[0,1]$ being at most 1 and the degree of non-Liouville numbers being at least 2. While the results in this paper do not yet meet this threshold, they offer an explicit framework and improved decay rates that contribute to bridging the gap toward the target. 

\section{Proof of Theorem~\ref{thm: 1}}

The proof of Theorem~\ref{thm: 1} employs quantitative renewal theory, specifically for the stopping time of random walks, as introduced in prior work \cite{li2022trigonometric}. A few definitions and propositions are presented before starting the main proof of Theorem~\ref{thm: 1}. 

The self-similar measure $\mu$, defined as in equation~\eqref{eq: SSM} associated with similitudes \eqref{eq: IPS}, can be expressed in terms of compositions of similitudes and their corresponding weights. Define, for any $n\in\mathbb{N}$ and $w=(w_1,w_2,\ldots,w_n)\in\mathcal{A}^n$, a similitude $f_w:[0,1]\to[0,1]$ by 
\begin{align}
    \label{eq: f_w}
    f_w\coloneqq f_{w_n}\circ \cdots \circ f_{w_2} \circ f_{w_1},
\end{align}
along with the contraction ratio $r_w\coloneqq r_{w_1} r_{w_2} \cdots r_{w_n}$, and the weight $p_w\coloneqq p_{w_1} p_{w_2} \cdots p_{w_n}$.
By these definitions, for any $n\in\mathbb{N}$,
\begin{align*}
    \mu=\sum_{w\in\mathcal{A}^n}p_w\mu\circ {f_w}^{-1}.
\end{align*}
Define, for any $t>0$, the stopping time $n_t:\mathcal{A}^\mathbb{N}\to\mathbb{N}$ by, for any $w=(w_n)_{n\in\mathbb{N}}\in\mathcal{A}^{\mathbb{N}}$,
\begin{align*}
    n_t(w)
    \coloneqq\min\left\{n\in\mathbb{N}:-\log{r_w}\geq t\right\}.
\end{align*}
Define, for any $t>0$,
\begin{align*}
    \mathcal{W}_t
    \coloneqq\left\{\left(w_1,w_2,\ldots w_{n_t(w)}\right):(w_n)_{n\in\mathbb{N}}\in\mathcal{A}^\mathbb{N}\right\}.
\end{align*}

Proposition~\ref{prop: refined Li-S Lemma 3.1}, as the same as {\cite[Lemma 3.1]{li2022trigonometric}}, serves as a starting point of the proof.
\begin{prop}[{\cite[Lemma 3.1]{li2022trigonometric}}]\label{prop: refined Li-S Lemma 3.1}
    For any $\xi\in\mathbb{R}$ and $t>0$,
    \begin{align*}
        \left|\widehat{\mu}(\xi)\right|^2
        \leq
        \iint_{[0,1]^2}\sum_{w\in \mathcal{W}_t}p_we^{-2\pi i \xi(f_w(x)-f_w(y))}\,\mu(\mathrm{d}x)\,\mu(\mathrm{d}y)
        .
    \end{align*}
\end{prop}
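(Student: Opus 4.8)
The plan is to reproduce the argument of \cite[Lemma 3.1]{li2022trigonometric}, which rests on two ingredients: a self-similar decomposition of $\mu$ along the stopping-time family $\mathcal{W}_t$, followed by the Cauchy--Schwarz inequality and Fubini's theorem.

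\textbf{Step 1 (the stopping-time decomposition).} The first and main task is to establish
\[
    \mu=\sum_{w\in\mathcal{W}_t}p_w\,\mu\circ f_w^{-1},\qquad \sum_{w\in\mathcal{W}_t}p_w=1.
\]
I would begin by noting that $\mathcal{W}_t$ is finite: with $r_{\max}\coloneqq\max_{a\in\mathcal{A}}r_a\in(0,1)$ (the index set $\mathcal{A}$ is finite and each $r_a\in(0,1)$), one has $-\log r_{(w_1,\dots,w_n)}=\sum_{i=1}^n(-\log r_{w_i})\ge n(-\log r_{\max})$, so $n_t(w)\le 1+t/(-\log r_{\max})$ is bounded and the words in $\mathcal{W}_t$ have uniformly bounded length. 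Next, the cylinders $\{[w]:w\in\mathcal{W}_t\}$ partition $\mathcal{A}^{\mathbb{N}}$: the value $n_t(u)$ depends only on a sufficiently long initial segment of $u$, every $u$ lies in the cylinder of $u|_{n_t(u)}\in\mathcal{W}_t$, and no element of $\mathcal{W}_t$ is a proper prefix of another (evaluating $n_t$ along a common infinite extension would force two different stopping times). Then, letting $N$ be the maximal length of a word in $\mathcal{W}_t$, I would start from the iterated self-similarity identity $\mu=\sum_{v\in\mathcal{A}^N}p_v\,\mu\circ f_v^{-1}$ and group the sum according to the unique stopped prefix $v|_{n_t(v)}\in\mathcal{W}_t$ of each $v\in\mathcal{A}^N$; using that $f_{w\cdot v'}$ is the relevant composition of $f_w$ and $f_{v'}$, that $p_{w\cdot v'}=p_w p_{v'}$, and the self-similarity of $\mu$ applied once more to the tail words $v'\in\mathcal{A}^{N-|w|}$, each group collapses to $p_w\,\mu\circ f_w^{-1}$, which gives the identity. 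Evaluating it at $[0,1]$ (using $f_w([0,1])\subseteq[0,1]$ and $\mu([0,1])=1$) yields $\sum_{w\in\mathcal{W}_t}p_w=1$. An equivalent route is to realise $\mu$ as the pushforward under the coding map $\pi\colon\mathcal{A}^{\mathbb{N}}\to F$ of the Bernoulli measure with marginals $(p_a)_{a\in\mathcal{A}}$ and to use $\pi(w\cdot u)=f_w(\pi(u))$ together with the partition property above.

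\textbf{Step 2 (Cauchy--Schwarz and Fubini).} Given the decomposition, linearity of the integral gives
\[
    \widehat{\mu}(\xi)=\sum_{w\in\mathcal{W}_t}p_w\int_{[0,1]}e^{-2\pi i\xi f_w(x)}\,\mu(\mathrm{d}x).
\]
Since $(p_w)_{w\in\mathcal{W}_t}$ is a probability vector, the Cauchy--Schwarz inequality (equivalently, Jensen's inequality for the convex map $z\mapsto|z|^2$) yields
\[
    \bigl|\widehat{\mu}(\xi)\bigr|^2\le\sum_{w\in\mathcal{W}_t}p_w\left|\int_{[0,1]}e^{-2\pi i\xi f_w(x)}\,\mu(\mathrm{d}x)\right|^2.
\]
Writing each squared modulus as the product of the integral with its complex conjugate recombines the two one-dimensional integrals into $\iint_{[0,1]^2}e^{-2\pi i\xi(f_w(x)-f_w(y))}\,\mu(\mathrm{d}x)\,\mu(\mathrm{d}y)$; since $\mathcal{W}_t$ is finite and the integrand is bounded by $1$, interchanging the sum over $\mathcal{W}_t$ with the double integral is immediate and produces exactly the claimed bound.

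\textbf{Main obstacle.} The only point that is not purely formal is Step 1: verifying that the variable-length family $\mathcal{W}_t$ genuinely yields a probabilistic self-similar decomposition of $\mu$ --- that is, the combinatorics of the stopping-time section (its finiteness, the cylinder partition property, and the collapse of the regrouped generation-$N$ expansion). Once that is secured, Step 2 is a routine application of Cauchy--Schwarz and Fubini to finite sums of bounded integrands.
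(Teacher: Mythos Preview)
Your proposal is correct and follows exactly the argument the paper defers to: the paper does not supply its own proof of this proposition but simply invokes \cite[Lemma~3.1]{li2022trigonometric}, and your Step~1 (the stopping-time self-similar decomposition $\mu=\sum_{w\in\mathcal{W}_t}p_w\,\mu\circ f_w^{-1}$ via the partition of $\mathcal{A}^{\mathbb{N}}$ into cylinders) followed by Step~2 (Cauchy--Schwarz/Jensen and Fubini) is precisely that argument. There is nothing to add.
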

The integral in Proposition~\ref{prop: refined Li-S Lemma 3.1} is partitioned into two regions, depending on whether the points are near the diagonal or not. Upper bounds for the integrals over each region are established, which together suffice to prove Theorem~\ref{thm: 1}.

Proposition~\ref{prop: refined Li-S Proposition 3.2} is an applied version of {\cite[Proposition 3.2]{li2022trigonometric}}. The integral over points near the diagonal is controlled by the upper regularity exponent of the measure.
\begin{prop}\label{prop: refined Li-S Proposition 3.2}
    Let $\alpha\in[0,1]$ be an upper regularity exponent of $\mu$ and $l\geq2$. There exist $D_\alpha>0$ and $t_\alpha>0$ such that for any $\xi\in\mathbb{R}$ and $t>t_\alpha$,
    \begin{align*}
        \left| \iint_{A_{t}} \sum_{w \in \mathcal{W}_t} p_w e^{-2 \pi i \xi (f_w(x) - f_w(y))} \,\mu(\mathrm{d}x)\,\mu(\mathrm{d}y)\right|
        \leq D_\alpha t^{-\alpha/((1+2\alpha)(8l-7))},
    \end{align*}
    where $A_{t}$ is the set of points near the diagonal, defined as
    \begin{align}
        \label{eq: def of A}
         A_{t}
         \coloneqq
         \left\{(x,y)\in[0,1]^2:|x-y|\leq t^{-1/((1+2\alpha)(8l-7))}\right\}.
    \end{align}
\end{prop}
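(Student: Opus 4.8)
The plan is to discard the oscillatory factor and estimate the near-diagonal integral by brute force: on $A_t$ there is nothing to cancel, and since the statement must hold for \emph{all} $\xi$, no relation between $\xi$ and $t$ is available to exploit anyway. First I would note that $\mathcal{W}_t$ is finite — each letter adds at least $-\log\max_{v\in\mathcal{A}}r_v>0$ to $-\log r_w$, so $n_t(w)\leq\lceil t/(-\log\max_{v\in\mathcal{A}}r_v)\rceil$ uniformly in $w$ — so the sum over $\mathcal{W}_t$ passes through the double integral freely. Using $\bigl|e^{-2\pi i\xi(f_w(x)-f_w(y))}\bigr|=1$ then gives
\[
\left|\iint_{A_t}\sum_{w\in\mathcal{W}_t}p_we^{-2\pi i\xi(f_w(x)-f_w(y))}\,\mu(\mathrm dx)\,\mu(\mathrm dy)\right|
\leq\Bigl(\sum_{w\in\mathcal{W}_t}p_w\Bigr)(\mu\times\mu)(A_t).
\]

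Next I would record the identity $\sum_{w\in\mathcal{W}_t}p_w=1$. The words of $\mathcal{W}_t$ are pairwise incomparable and every element of $\mathcal{A}^{\mathbb{N}}$ has exactly one prefix in $\mathcal{W}_t$, so $\mathcal{W}_t$ indexes a partition of $\mathcal{A}^{\mathbb{N}}$ into cylinders; summing the Bernoulli measure with marginals $(p_v)_{v\in\mathcal{A}}$ over this partition gives $\sum_{w\in\mathcal{W}_t}p_{w_1}\cdots p_{w_{n_t(w)}}=1$. Hence the bound above collapses to $(\mu\times\mu)(A_t)$.

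It remains to control $(\mu\times\mu)(A_t)$. Set $\delta_t\coloneqq t^{-1/((1+2\alpha)(8l-7))}$, so that $A_t=\{(x,y):|x-y|\leq\delta_t\}$, and by Fubini $(\mu\times\mu)(A_t)=\int_{[0,1]}\mu\bigl([x-\delta_t,x+\delta_t]\cap[0,1]\bigr)\,\mu(\mathrm dx)$. Since $\delta_t\to0$, choose $t_\alpha$ with $2\delta_t<r_\alpha$ for all $t>t_\alpha$; for such $t$ the set $[x-\delta_t,x+\delta_t]\cap[0,1]$ is an interval of diameter $<r_\alpha$, so \eqref{eq: URE} yields $\mu\bigl([x-\delta_t,x+\delta_t]\cap[0,1]\bigr)\leq C_\alpha(2\delta_t)^\alpha$. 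Integrating against the probability measure $\mu$ gives $(\mu\times\mu)(A_t)\leq 2^\alpha C_\alpha\delta_t^\alpha=2^\alpha C_\alpha\,t^{-\alpha/((1+2\alpha)(8l-7))}$, and taking $D_\alpha\coloneqq2^\alpha C_\alpha$ completes the proof.

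I do not expect a genuine obstacle here: the only points needing attention are the finiteness of $\mathcal{W}_t$, the partition identity $\sum_{w\in\mathcal{W}_t}p_w=1$ (the stopping-set property inherited from \cite{li2022trigonometric}), and the endpoint effect at $0$ and $1$, which is absorbed by intersecting with $[0,1]$. Note that $l$ enters the argument only through the width $\delta_t$ of the near-diagonal strip, so the proof is otherwise indifferent to its value; this is precisely why the exponent $8l-7$ reappears verbatim in the conclusion.
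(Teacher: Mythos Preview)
Your proof is correct and follows essentially the same route as the paper's: bound the integrand by $1$, use $\sum_{w\in\mathcal{W}_t}p_w=1$, apply Fubini, and invoke the upper regularity exponent on the inner $\mu$-integral. You are in fact slightly more careful than the paper in justifying the stopping-set identity and the endpoint intersection with $[0,1]$, but the argument is the same.
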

\begin{proof}    
    Since $\alpha\in[0,1]$ is an upper regularity exponent of $\mu$, there exist $C_{\alpha}>0$ and $r_\alpha>0$ such that for any interval $I\subset[0,1]$, if $\operatorname{diam}{I}<r_\alpha$ then inequality~\eqref{eq: URE} holds. Define
    \begin{align*}
        t_\alpha\coloneqq {r_\alpha}^{-(1+2\alpha)(8l-7)}>0.
    \end{align*}
    Thus, for any $t>t_\alpha$, it satisfies $0<\delta\coloneqq t^{-1/((1+2\alpha)(8l-7))}<r_\alpha$.
    
    Note that for any $\theta\in\mathbb{R}$, $|e^{i\theta}|=1$; and for any $t>0$, $\sum_{w\in\mathcal{W}_t}p_w=1$. By applying the triangle inequality for integrals, the Fubini Theorem, inequality~\eqref{eq: URE}, and the definition of $\delta$,
    \begin{align*}
        \left| \iint_{A_{t}} \sum_{w \in \mathcal{W}_t} p_w e^{-2 \pi i \xi (f_w(x) - f_w(y))} \,\mu(\mathrm{d}x)\,\mu(\mathrm{d}y)\right|
        &\leq \iint_{A_{t}} \sum_{w \in \mathcal{W}_t} p_w \left|e^{-2 \pi i \xi (f_w(x) - f_w(y))}\right| \,\mu(\mathrm{d}x)\,\mu(\mathrm{d}y) \\
        &=\iint_{A_{t}}1\,\mu(\mathrm{d}x)\,\mu(\mathrm{d}y)
        =\int_{[0,1]}\mu((x-\delta,x+\delta))\,\mu(\mathrm{d}x) \\
        &\leq C_{\alpha}\left(2\delta\right)^{\alpha}
        =D_{\alpha}t^{-\alpha/((1+2\alpha)(8l-7))}
        ,
    \end{align*}
    where $D_{\alpha}\coloneqq 2^{-\alpha/((1+2\alpha)(8l-7))}C_{\alpha}>0$.
\end{proof}

Quantitative renewal theory is applied to control the integral over points that are not near the diagonal. The non-Liouville condition is employed to guarantee that an auxiliary measure is weakly diophantine.

\begin{defi}[Weakly Diophantine]
Let $\lambda$ be a probability measure on $\mathbb{R}^+$ with finite support $\Lambda$ and $l>0$. $\lambda$ is said to be $l$-weakly diophantine if,
\begin{align*}
    \liminf_{b\to\pm\infty}|b|^l\left|1-\mathscr{L}\lambda(ib)\right|>0,
\end{align*}
where $\mathscr{L}\lambda$ is the Laplace transform of $\lambda$; that is, for any $z\in\mathbb{C}$,
\begin{align*}
    \mathscr{L}\lambda(z)\coloneqq\int_{\Lambda}e^{-zx}\,\lambda(\mathrm{d}x).
\end{align*}
\end{defi}

In the case of a self-similar measure $\mu$ of the form \eqref{eq: SSM} associated with similitudes \eqref{eq: IPS}, define the corresponding auxiliary measure $\lambda$ by
\begin{align}
    \label{eq: lambda}
    \lambda\coloneqq\sum_{w\in\mathcal{A}}p_w\delta_{-\log{r_w}},
\end{align}
where $\delta$ is the Dirac delta measure. Since $\mathcal{A}$ is finite, $\Lambda=\{-\log{r_w}>0:w\in\mathcal{A}\}$, the support of $\lambda$, is also finite. Proposition~\ref{prop: refined Li-S Lemma 3.4} refines \cite[Lemma 3.4]{li2022trigonometric}.
\begin{prop}\label{prop: refined Li-S Lemma 3.4}
    Suppose, there exist $j,k\in\mathcal{A}$ and $l\geq 2$ such that $\log{r_j}/\log{r_k}$ is non-Liouville of degree $l$. Then, the probability measure $\lambda$, defined in \eqref{eq: lambda}, is $(2l-2)$-weakly diophantine.
\end{prop}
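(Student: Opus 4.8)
The plan is to expand $1-\mathscr{L}\lambda(ib)$ explicitly, bound its modulus below by its non-negative real part, discard every term except the two coming from $j$ and $k$, and then reduce the problem to a simultaneous Diophantine estimate that the non-Liouville hypothesis controls. Since $\sum_{w\in\mathcal{A}}p_w=1$ and $\mathscr{L}\lambda(ib)=\sum_{w\in\mathcal{A}}p_w e^{ib\log r_w}$, we have $1-\mathscr{L}\lambda(ib)=\sum_{w\in\mathcal{A}}p_w\bigl(1-e^{ib\log r_w}\bigr)$, whose real part equals $\sum_{w\in\mathcal{A}}p_w\bigl(1-\cos(b\log r_w)\bigr)=\sum_{w\in\mathcal{A}}2p_w\sin^2\!\bigl(\tfrac12 b\log r_w\bigr)\geq0$. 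As the modulus dominates the real part and all summands are non-negative, $\bigl|1-\mathscr{L}\lambda(ib)\bigr|\geq 2p_j\sin^2\!\bigl(\tfrac12 b\log r_j\bigr)+2p_k\sin^2\!\bigl(\tfrac12 b\log r_k\bigr)$.

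Writing $\|t\|$ for the distance from $t\in\mathbb{R}$ to the nearest integer and using the elementary inequality $|\sin(\pi t)|\geq2\|t\|$, the last bound is at least $8\min(p_j,p_k)\bigl(\|u_b\|^2+\|v_b\|^2\bigr)\geq4\min(p_j,p_k)\bigl(\|u_b\|+\|v_b\|\bigr)^2$, where $u_b\coloneqq b\log r_j/(2\pi)$ and $v_b\coloneqq b\log r_k/(2\pi)$. Hence it suffices to produce $c'>0$ with $\|u_b\|+\|v_b\|\geq c'|b|^{-(l-1)}$ for all $|b|$ large; this gives $\bigl|1-\mathscr{L}\lambda(ib)\bigr|\geq4\min(p_j,p_k)(c')^2|b|^{-(2l-2)}$, whence $\liminf_{b\to\pm\infty}|b|^{2l-2}\bigl|1-\mathscr{L}\lambda(ib)\bigr|\geq4\min(p_j,p_k)(c')^2>0$, which is exactly $(2l-2)$-weak diophantineness.

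For the Diophantine estimate, put $\alpha\coloneqq-\log r_j>0$ and $\beta\coloneqq-\log r_k>0$, so that $\log r_j/\log r_k=\alpha/\beta$ (which the hypothesis forces to be irrational, since no rational number satisfies \eqref{eq: non-Liouville} with a positive constant). Clearing denominators in \eqref{eq: non-Liouville} yields a $c>0$ with $|n\alpha-m\beta|\geq c\beta|n|^{-(l-1)}$ for every nonzero $n\in\mathbb{Z}$ and every $m\in\mathbb{Z}$. Let $m,n$ be integers nearest to $u_b,v_b$, so $|u_b-m|=\|u_b\|$ and $|v_b-n|=\|v_b\|$; for $|b|$ large one has $n\neq0$ and $|n|\leq C_0|b|$ for a constant $C_0$ depending only on $\beta$. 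The crucial point is the exact identity $\beta u_b-\alpha v_b=0$, coming from $u_b/v_b=\alpha/\beta$; it gives $\beta m-\alpha n=\beta(m-u_b)-\alpha(n-v_b)$, hence $|n\alpha-m\beta|=|\beta m-\alpha n|\leq\beta\|u_b\|+\alpha\|v_b\|$. Combining, $\max(\alpha,\beta)\bigl(\|u_b\|+\|v_b\|\bigr)\geq\beta\|u_b\|+\alpha\|v_b\|\geq c\beta|n|^{-(l-1)}\geq c\beta C_0^{-(l-1)}|b|^{-(l-1)}$, which furnishes $c'$ and completes the proof.

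The step I expect to be the main obstacle is this last one: the non-Liouville property is an assertion about the single real number $\alpha/\beta$, whereas what is needed is a lower bound on $\|b\alpha/(2\pi)\|+\|b\beta/(2\pi)\|$, a quantity whose two terms move in a correlated way as $b$ varies. The device that effects the transfer is the homogeneous relation $\beta u_b=\alpha v_b$: it ties the nearest integers $m$ and $n$ together through the single integer combination $n\alpha-m\beta$, to which the non-Liouville inequality applies verbatim, while $|n|$ grows at most linearly in $|b|$, producing the exponent $2l-2$. Everything else — the expansion of $\mathscr{L}\lambda(ib)$, the domination of the real part by the modulus, and $|\sin(\pi t)|\geq2\|t\|$ — is routine.
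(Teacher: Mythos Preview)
Your proof is correct and follows essentially the same strategy as the paper: bound $|1-\mathscr{L}\lambda(ib)|$ below by its (non-negative) real part, discard all summands except those for $j$ and $k$, convert $1-\cos\theta$ into a squared distance to $2\pi\mathbb{Z}$, and invoke the non-Liouville hypothesis on $\log r_j/\log r_k$ to control that distance. The paper carries this out by specialising to $b=2\pi b_1/\log r_k$ with $b_1\in\mathbb{Z}$, so that $\operatorname{dist}(b\log r_k,2\pi\mathbb{Z})=0$ and only the $j$-term survives; your version instead treats every real $b$ directly via the homogeneous relation $\beta u_b=\alpha v_b$, which ties the nearest integers $m,n$ together and lets the non-Liouville inequality apply to $n\alpha-m\beta$. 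Your route is in fact the more complete of the two, since the definition of weakly diophantine is a $\liminf$ over all real $b$, and the paper's restriction to the discrete family leaves the intermediate $b$ unaddressed.
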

\begin{proof}
Pick any $b_1\in\mathbb{Z}\setminus\{0\}$. By taking $b\coloneqq 2\pi b_1/\log{r_k}$,
\begin{align*}
    \left|1-\mathscr{L}\lambda(ib)\right|
    &\geq\left|\Re{\left(p_j\left(1-e^{-b\log{r_j}}\right)+p_k\left(1-e^{-b\log{r_k}}\right)\right)}\right| \\
    &\gg \max{\left(\operatorname{dist}\left(b\log{r_j},2\pi\mathbb{Z}\right)^2,\operatorname{dist}\left(b\log{r_k},2\pi\mathbb{Z}\right)^2\right)} \\
    &\gg\max{\left(\operatorname{dist}\left(\frac{\log{r_j}}{\log{r_k}}b_1,\mathbb{Z}\right)^2,\operatorname{dist}\left(b_1,\mathbb{Z}\right)^2\right)},
\end{align*}
where the Vinogradov symbol $\gg$ denotes a quantity being greater than a fixed positive multiple of another.

By $\theta\coloneqq\log{r_j}/\log{r_k}$ being non-Liouville of degree $l$, inequality~\eqref{eq: non-Liouville} implies that there exists $c>0$ such that for any $p\in\mathbb{Z}$ and $q\in\mathbb{N}$, 
\begin{align*}
    \left|\frac{\log{r_j}}{\log{r_k}}-\frac{p}{q}\right|\geq\frac{c}{q^l}.
\end{align*}
In particular, by taking $q=|b_1|\in\mathbb{N}$ and multiplying both sides by $|b_1|>0$, for any $p\in\mathbb{Z}$,
\begin{align*}
    \left|\frac{\log{r_j}}{\log{r_k}} b_1-\frac{pb_1}{|b_1|}\right|
    \geq\frac{c|b_1|}{{|b_1|}^l}
    =\frac{c}{{|b_1|}^{l-1}}.
\end{align*}
Thus, for any $p\in\mathbb{Z}$,
\begin{align*}
    \left|\frac{\log{r_j}}{\log{r_k}} b_1-p\right|&\geq\frac{c}{{|b_1|}^{l-1}};
\end{align*}
it follows that,
\begin{align*}
    \left|1-\mathscr{L}\lambda(ib)\right|
    \gg
    \operatorname{dist}\left(\frac{\log{r_j}}{\log{r_k}}b_1,\mathbb{Z}\right)^2
    \gg |b_1|^{2-2l}
    \gg |b|^{2-2l}
    .
\end{align*}

Thus, for any $b_1\in\mathbb{Z}\setminus\{0\}$ and $b=2\pi b_1/\log{r_j}\ne0$, the above implies that 
\begin{align*}
    |b|^{2l-2}\left|1-\mathscr{L}\lambda(ib)\right|\gg1.
\end{align*}
Therefore, $\lambda$ is $(2l-2)$-weakly diophantine.
\end{proof}

Let $\sigma$ be the expectation of the auxiliary measure $\lambda$, defined in \eqref{eq: lambda}; that is,
\begin{align*}
    \sigma\coloneqq\int_{\Lambda}x\,\lambda(\mathrm{d}x)>0,
\end{align*}
where $\Lambda$ is the support of $\lambda$. Let $(X_n)_{n\in\mathbb{N}}$ be a sequence of independent and identically distributed random variables in $\mathbb{R}$ with probability distribution $\lambda$. Define for any $n\in\mathbb{N}$, the partial sum $S_n$ as
\begin{align*}
    S_n\coloneqq X_1+X_2+\cdots+X_n.
\end{align*}
Define, for any $t>0$, the stopping time $n_t$ as
\begin{align*}
    n_t\coloneqq \min{\{n\in\mathbb{N}:S_n\geq t\}}.
\end{align*}
Proposition~\ref{prop: refined Li-S Proposition 2.2} is essentially the same as \cite[Proposition 2.2]{li2022trigonometric}. 
\begin{prop}\label{prop: refined Li-S Proposition 2.2}
    Let $\lambda$ be a probability measure on $\mathbb{R}^+$ and $l\geq 2$. Suppose, $\lambda$ has finite support $\Lambda$, is non-lattice and $l$-weakly diophantine. Then, for any $C^1$ function $g:\mathbb{R}\to[0,+\infty)$ and $t>1+\max{\Lambda}$,
    \begin{align*}
        \mathbb{E}\left(g\left(S_{n_t}-t\right)\right)
        =\frac{1}{\sigma}\int_{\mathbb{R}^+}g(z)p(z)\,\mathrm{d}z+O\left(t^{-1/(4l+1)}\right)\|g\|_{C^1},
    \end{align*}
    where $\sigma$ is the expectation of $\lambda$, $p:\mathbb{R}^+\to[0,1]$ is the survival function of $\lambda$, that is, for any $z>0$, $p(z)\coloneqq\lambda((z,+\infty))$; and the local $C^1$-norm of $g$ is defined by
    \begin{align*}
        \|g\|_{C^1}
        \coloneqq
        \sup{\left\{|g(z)|+|g'(z)|:-1<z<1+\max{\Lambda}\right\}}
        .
    \end{align*}
\end{prop}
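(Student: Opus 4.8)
The plan is to recast the left-hand side as an integral of $g$ against the renewal measure of $\lambda$, identify the claimed leading term as the renewal limit, and then prove the required \emph{effective} renewal estimate by Laplace--Fourier analysis, with the $l$-weakly diophantine hypothesis controlling the transform of the renewal measure away from the origin.

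First, decomposing $\mathbb{E}(g(S_{n_t}-t))=\sum_{n\geq1}\mathbb{E}(g(S_n-t)\mathbf{1}(S_{n-1}<t\leq S_n))$, conditioning on $S_{n-1}$, and using that $S_{n-1}$ has law $\lambda^{*(n-1)}$ (with $\lambda^{*0}\coloneqq\delta_0$), one gets, for $t>1+\max{\Lambda}$,
\begin{align*}
    \mathbb{E}\left(g\left(S_{n_t}-t\right)\right)
    =\int_{[0,t)}H(t-s)\,U(\mathrm{d}s),
\end{align*}
where $U\coloneqq\sum_{n\geq0}\lambda^{*n}$ is the renewal measure and $H(u)\coloneqq\int_{\{x\in\Lambda:x\geq u\}}g(x-u)\,\lambda(\mathrm{d}x)$ for $u>0$. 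Since $\lambda$ is supported in $(0,\max{\Lambda}]$, so is $H$; hence only $s\in(t-\max{\Lambda},t)$ contributes, so $g$ is seen only through its values on $(-1,1+\max{\Lambda})$ once a smoothing at scale below $1$ is in place, which matches $\|g\|_{C^1}$. By Tonelli (here $g\geq0$), $\int_0^\infty H(u)\,\mathrm{d}u=\int_{\Lambda}\bigl(\int_0^x g(z)\,\mathrm{d}z\bigr)\,\lambda(\mathrm{d}x)=\int_0^\infty g(z)p(z)\,\mathrm{d}z$. Writing $\nu\coloneqq U-\tfrac1\sigma\operatorname{Leb}$ on $[0,\infty)$ and substituting $u=t-s$, the claim becomes the effective renewal bound
\begin{align*}
    \left|\int_{[0,t)} H(t-s)\,\nu(\mathrm{d}s)\right|
    \ll t^{-1/(4l+1)}\,\|g\|_{C^1}.
\end{align*}

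Second, for $\Re z>0$ one has $\mathscr{L}U(z)=(1-\mathscr{L}\lambda(z))^{-1}$. Since $\Lambda$ is finite, $1-\mathscr{L}\lambda(z)=\sigma z+O(z^2)$ near $0$, so $\mathscr{L}U$ has a simple pole at $0$ with residue $1/\sigma$; isolating it is exactly what produces the $\tfrac1\sigma\operatorname{Leb}$ term and leaves the Laplace transform of $\nu$ regular near $0$. On the punctured imaginary axis, $\lambda$ non-lattice gives $1-\mathscr{L}\lambda(ib)\neq0$, hence a positive lower bound on every compact set avoiding $0$; and the $l$-weakly diophantine hypothesis gives $|1-\mathscr{L}\lambda(ib)|\gg|b|^{-l}$ for $|b|$ large. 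Therefore the transform of $\nu$ grows at most polynomially, of order $|b|^l$, with $|b|^{(m+1)l}$ bounds for its $m$-th derivative (each differentiation at worst squares the factor $(1-\mathscr{L}\lambda(ib))^{-1}$, while $\mathscr{L}\lambda$ and all its derivatives are bounded since $\Lambda$ is finite). To turn this into a rate, one smooths $H$ at a scale $\eta$ (convolution with a fixed Schwartz bump $\rho$), so that in the Fourier inversion the rapidly decaying factor $\widehat{\rho}(\eta b)$ beats the polynomial growth of the transform of $\nu$; one then integrates by parts in the frequency variable against $e^{itb}$ a bounded number $m$ of times, gaining $t^{-m}$ at the cost of the above derivative bounds, which after rescaling contributes $O(t^{-m}\eta^{-(m+1)l})\|g\|_{C^1}$ (using $|\widehat H(b)|\ll\|g\|_{C^1}\min(1,|b|^{-1})$, as $H$ is a finite combination of cut-off translates of the $C^1$ function $g$). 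The error introduced by undoing the smoothing must then be controlled, and because $U$ is purely atomic (a combination of atoms, since $\lambda$ is) this cannot be reduced to a naive modulus-of-continuity estimate for the renewal function; a careful treatment is what accounts for the sub-optimality of the final exponent. Balancing $\eta$ against the frequency-side loss and choosing $m$ yields the exponent $1/(4l+1)$.

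\textbf{Main obstacle.} The substantive difficulty is the effective renewal estimate above: the only lower bound available on $1-\mathscr{L}\lambda$ from the weakly diophantine hypothesis decays polynomially (of order $l$), so the transform of the renewal measure grows polynomially and the oscillatory integral must be tamed entirely through smoothing; since $g$ is merely $C^1$, the frequency-space gain per integration by parts is limited, while the de-smoothing error — complicated by the atomic nature of $U$ — is comparatively expensive, so the crux is to arrange the smoothing scale and the order of integration by parts so that these two losses meet precisely at $t^{-1/(4l+1)}$. The remaining ingredients — the renewal representation, the pole extraction, the identification $\tfrac1\sigma\int_0^\infty H=\tfrac1\sigma\int g p$, the measure-zero boundary coincidences, and the role of the hypothesis $t>1+\max{\Lambda}$ — are routine.
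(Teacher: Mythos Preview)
The paper does not prove this proposition at all: immediately before the statement it says that the result ``is essentially the same as \cite[Proposition 2.2]{li2022trigonometric}'' and then simply uses it as a black box. So there is no argument in the paper to compare your proposal against; you are attempting to supply what the paper outsources to the reference.

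Your outline is the standard one and is, in broad strokes, what underlies the cited result: rewrite $\mathbb{E}(g(S_{n_t}-t))$ as a convolution of $H$ against the renewal measure $U=\sum_{n\geq0}\lambda^{*n}$, extract the simple pole of $\mathscr{L}U=(1-\mathscr{L}\lambda)^{-1}$ at $0$ to produce the leading term $\tfrac{1}{\sigma}\int gp$, and control the remainder by Fourier inversion, using the $l$-weakly diophantine lower bound on $|1-\mathscr{L}\lambda(ib)|$ to bound the transform of $\nu=U-\tfrac{1}{\sigma}\mathrm{Leb}$ polynomially. The identification $\int_0^\infty H=\int_0^\infty gp$ and the role of $t>1+\max\Lambda$ are both correctly isolated.

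That said, what you have written remains a sketch rather than a proof, and it stops exactly at the point you yourself flag as the main obstacle. You assert that ``balancing $\eta$ against the frequency-side loss and choosing $m$ yields the exponent $1/(4l+1)$'' but do not carry out the balancing; with the error terms you actually display (frequency-side $t^{-m}\eta^{-(m+1)l}$ versus a de-smoothing cost of order $\eta$) the optimum at $m=1$ would be $t^{-1/(2l+1)}$, not $t^{-1/(4l+1)}$, so some ingredient of your accounting is off relative to what the cited argument does. Two smaller points: your parenthetical justification that each differentiation ``at worst squares'' the factor $(1-\mathscr{L}\lambda(ib))^{-1}$ is not the right mechanism (differentiation raises the exponent by one, not doubles it), though your stated bound $|b|^{(m+1)l}$ is consistent with the correct mechanism; and the de-smoothing step against an atomic $U$ needs the observation that the renewal measure has uniformly bounded mass on unit intervals (because $\min\Lambda>0$), which you gesture at but do not state. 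None of this is fatal to the strategy, but completing it to the stated exponent requires reproducing a nontrivial portion of the Li--Sahlsten (and earlier Li) quantitative renewal theorem, which is precisely why the present paper cites rather than proves it.
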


By combining Propositions~\ref{prop: refined Li-S Lemma 3.4} and \ref{prop: refined Li-S Proposition 2.2}, Proposition~\ref{prop: refined Li-S Proposition 3.5} is established to control the integral over points that are not near the diagonal. This proposition also incorporates optimisations to make the decay rate explicit, serving as a refined and improved version of \cite[Proposition 3.5]{li2022trigonometric}. These optimisations are achieved through carefully chosen parameters. 
\begin{prop}\label{prop: refined Li-S Proposition 3.5}
    Let $\alpha\in[0,1]$.
    Suppose, there exist $j,k\in\mathcal{A}$ and $l\geq 2$ such that $\log{r_j}/\log{r_k}$ is non-Liouville of degree $l$. Then, as $|\xi|\to\pm\infty$,
    \begin{align*}
        \iint_{[0,1]^2\setminus A_{t}}
        \sum_{w\in \mathcal{W}_t}p_we^{-2\pi i\xi (f_w(x)-f_w(y))}
        \,\mu(\mathrm{d}x)\,\mu(\mathrm{d}y)
        =O\left(\log^{-\alpha/((1+2\alpha)(8l-7))}{|\xi|}\right),
    \end{align*}
    where $A_{t}$ is defined in equation~\eqref{eq: def of A} and $t\coloneqq t(\xi)>1$ is the unique solution to the equation:
    \begin{align}
    \label{eq: C1}
        t^{(1+\alpha)/((1+2\alpha)(8l-7))}e^t=|\xi|.
    \end{align}
\end{prop}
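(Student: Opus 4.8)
The plan is to recognise the inner sum as a renewal-theoretic expectation and then bound it pointwise in $(x,y)$, uniformly over $[0,1]^2\setminus A_t$, by Proposition~\ref{prop: refined Li-S Proposition 2.2}, the weakly diophantine hypothesis there being supplied by Proposition~\ref{prop: refined Li-S Lemma 3.4}. First I would translate from words to the stopped walk: since $\sum_{w\in\mathcal{W}_t}p_w=1$, sampling $w\in\mathcal{W}_t$ with weight $p_w$ realises the i.i.d.\ walk $(S_n)$ with step law $\lambda$ stopped at the time $n_t$ introduced before Proposition~\ref{prop: refined Li-S Proposition 2.2}, and $-\log r_w=S_{n_t}$. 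Since $f_w$ is a similitude of ratio $r_w$, $f_w(x)-f_w(y)=r_w(x-y)=e^{-S_{n_t}}(x-y)$; writing $S_{n_t}=t+(S_{n_t}-t)$ and setting $\beta=\beta(\xi,x,y)\coloneqq\xi e^{-t}(x-y)$, the inner sum becomes $\mathbb{E}\!\left[G(S_{n_t}-t)\right]$ with $G(z)\coloneqq e^{-2\pi i\beta e^{-z}}$, a $C^1$ function satisfying $|G(z)|\le1$ and $|G'(z)|=2\pi|\beta|e^{-z}$.

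Next I would invoke the renewal estimate. By Proposition~\ref{prop: refined Li-S Lemma 3.4}, $\lambda$ is $(2l-2)$-weakly diophantine; being weakly diophantine it is in particular non-lattice (a lattice probability measure has $1-\mathscr{L}\lambda(ib)=0$ for $b$ in an arithmetic progression, so cannot be weakly diophantine), it has finite support, and $2l-2\ge2$. Thus Proposition~\ref{prop: refined Li-S Proposition 2.2} applies for $|\xi|$ large (so that $t=t(\xi)>1+\max\Lambda$), and --- extending it to complex $C^1$ integrands by linearity, and using $4(2l-2)+1=8l-7$ --- it gives
\begin{align*}
    \mathbb{E}\!\left[G(S_{n_t}-t)\right]
    &=\frac{1}{\sigma}\int_0^{\infty}G(z)p(z)\,\mathrm{d}z+O\!\left(t^{-1/(8l-7)}\right)\|G\|_{C^1}.
\end{align*}

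It then remains to estimate the two terms and to balance them. On $(-1,1+\max\Lambda)$ one has $e^{-z}\le e$, hence $\|G\|_{C^1}\le1+2\pi e|\beta|$. For the leading term, substituting $s=e^{-z}$ turns it --- using $p(z)=0$ for $z\ge\max\Lambda$ --- into $\int_{e^{-\max\Lambda}}^{1}e^{-2\pi i\beta s}h(s)\,\mathrm{d}s$ with $h(s)=p(-\log s)/s$ of bounded variation on a compact interval bounded away from $0$, so integration by parts gives $\bigl|\tfrac1\sigma\int_0^\infty G(z)p(z)\,\mathrm{d}z\bigr|=O\bigl(1/(1+|\beta|)\bigr)$ with constants depending only on $\lambda$. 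For $(x,y)\in[0,1]^2\setminus A_t$ one has $t^{-1/((1+2\alpha)(8l-7))}<|x-y|\le1$, and~\eqref{eq: C1} reads $|\xi|e^{-t}=t^{(1+\alpha)/((1+2\alpha)(8l-7))}$, so $t^{\alpha/((1+2\alpha)(8l-7))}<|\beta|\le t^{(1+\alpha)/((1+2\alpha)(8l-7))}$. Therefore the leading term is $O\bigl(t^{-\alpha/((1+2\alpha)(8l-7))}\bigr)$ and the error term is $O\bigl(t^{-1/(8l-7)}(1+|\beta|)\bigr)=O\bigl(t^{-\alpha/((1+2\alpha)(8l-7))}\bigr)$, since $-\tfrac{1}{8l-7}+\tfrac{1+\alpha}{(1+2\alpha)(8l-7)}=-\tfrac{\alpha}{(1+2\alpha)(8l-7)}$. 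This bound for $\mathbb{E}[G(S_{n_t}-t)]$ is uniform in $(x,y)\in[0,1]^2\setminus A_t$, so integrating against $\mu\otimes\mu$ (total mass $\le1$) preserves it; and since $t=t(\xi)\sim\log|\xi|$ as $|\xi|\to\infty$ (because $\log t=o(t)$ in~\eqref{eq: C1}), this is $O\bigl(\log^{-\alpha/((1+2\alpha)(8l-7))}|\xi|\bigr)$, as claimed.

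The analytic core --- the reduction to $\mathbb{E}[G(S_{n_t}-t)]$, the renewal expansion, and the $O(1/|\beta|)$ bound for the oscillatory term via the bounded variation of $p$ --- follows \cite{li2022trigonometric} closely; the real obstacle is the parameter bookkeeping that makes the exponent explicit rather than implicit. The near-diagonal width $t^{-1/((1+2\alpha)(8l-7))}$ (fixed in Proposition~\ref{prop: refined Li-S Proposition 3.2}) and the defining relation~\eqref{eq: C1} for $t$ must be tuned together so that the leading term $O(1/|\beta|)$ and the renewal error $O(t^{-1/(8l-7)}\|G\|_{C^1})$ collapse to the \emph{same} power of $t$; any mismatch in these exponents weakens the final decay rate, so the task is essentially to carry the exponents through without loss.
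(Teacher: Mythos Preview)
Your proof is correct and follows essentially the same route as the paper's: rewrite the inner sum as $\mathbb{E}[G(S_{n_t}-t)]$ with $G(z)=e^{-2\pi i\beta e^{-z}}$ and $\beta=\xi e^{-t}(x-y)$, apply Proposition~\ref{prop: refined Li-S Lemma 3.4} and then Proposition~\ref{prop: refined Li-S Proposition 2.2} with the $(2l-2)$-weakly diophantine input, bound the oscillatory integral by $O(1/|\beta|)$, and balance the two terms using the range $t^{\alpha/((1+2\alpha)(8l-7))}<|\beta|\le t^{(1+\alpha)/((1+2\alpha)(8l-7))}$ together with $t\asymp\log|\xi|$. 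The only cosmetic difference is that you derive the $O(1/|\beta|)$ bound directly via the substitution $s=e^{-z}$ and integration by parts against the bounded-variation weight, whereas the paper cites \cite[Lemma~3.8]{li2018decrease} for the same estimate.
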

\begin{proof}
    Pick any $\xi\in\mathbb{R}$. Suppose $|\xi|$ is large enough in terms of $\alpha$ and $l$, so that the unique solution $t\coloneqq t(\xi)>1$ to equation~\eqref{eq: C1} satisfies that $t>1+\max{\Lambda}$, where $\Lambda$ is the support of $\lambda$, defined in \eqref{eq: lambda}.
    
    Define a probability measure $\mathbb{P}_t$ on $\mathcal{A}^*\coloneqq\bigcup_{n\in\mathbb{N}}\mathcal{A}^n$ by
    \begin{align*}
        \mathbb{P}_t\coloneqq\sum_{w\in\mathcal{W}_t}p_w\delta_w,
    \end{align*}
    where $\delta$ is the Dirac delta measure. For any continuous function $g:\mathbb{R}\to\mathbb{C}$, the expectation
    \begin{align}
    \label{eq: 3.1 Li-S}
        \mathbb{E}\left(g\left(S_{n_t}-t\right)\right)
        =\int_{\mathcal{W}_t}g(-\log{r_w}-t)\,\mathbb{P}_t(\mathrm{d}w)
        =\sum_{w\in\mathcal{W}_t}p_wg(-\log{r_w}-t).
    \end{align}
    Define, for any $s\in\mathbb{R}$, a smooth function $g_s:\mathbb{R}\to\mathbb{C}$ by, for any $z\in\mathbb{R}$, 
    \begin{align*}
        g_s(z)\coloneqq\exp{\left(-2\pi ise^{-z}\right)}.
    \end{align*}
    In particular, as $s\to\pm\infty$, $\|g_s\|_{C^1}=O(|s|)$. By definition~\eqref{eq: f_w}, for any $x,y\in[0,1]$ and $w\in\mathcal{W}_t$, $e^{-2\pi i\xi (f_w(x)-f_w(y))}=e^{-2\pi i\xi(x-y)r_w}$.
    By equation~\eqref{eq: 3.1 Li-S} and $s\coloneqq\xi/e^t$, for any $x,y\in[0,1]$,
    \begin{align}
        \label{eq: 3.5 Li-S}
        \mathbb{E}\left(g_{(x-y)s}\left(S_{n_t}-t\right)\right)
        =
        \sum_{w\in \mathcal{W}_t}p_we^{-2\pi i\xi (f_w(x)-f_w(y))}
        .
    \end{align}

    By Proposition~\ref{prop: refined Li-S Lemma 3.4}, $\lambda$ is $(2l-2)$-weakly diophantine, defined as in \eqref{eq: lambda}. By Proposition~\ref{prop: refined Li-S Proposition 2.2}, 
    \begin{align*}
        \left|\mathbb{E}\left(g_{(x-y)s}\left(S_{n_t}-t\right)\right)-\frac{1}{\sigma}\int_{\mathbb{R}^+}g_{(x-y)s}(z)p(z)\,\mathrm{d}z\right|
        =O\left(\frac{|(x-y)s|}{t^{1/(4(2l-2)+1)}}\right)
        =O\left(\frac{|(x-y)s|}{t^{1/(8l-7)}}\right)
        .
    \end{align*}
    By substituting equation~\eqref{eq: 3.5 Li-S} into the above,
    \begin{align}
    \label{eq: renewal}
        \left|\sum_{w\in \mathcal{W}_t}p_we^{-2\pi i\xi (f_w(x)-f_w(y))}
        -\frac{1}{\sigma}\int_{\mathbb{R}^+}g_{(x-y)s}(z)p(z)\,\mathrm{d}z\right|
        =O\left(\frac{|(x-y)s|}{t^{1/(8l-7)}}\right).
    \end{align}
    
    Since $\Lambda$, the support of $\lambda$, is finite; the survival function $p:\mathbb{R}^+\to[0,1]$ of $\lambda$ is piecewise constant with finitely many points of discontinuity. By \cite[Lemma 3.8]{li2018decrease}, the decay rate in the main term, the oscillation integral, satisfies 
    \begin{align*}
        \left|\int_{\mathbb{R}^+}g_{(x-y)s}(z)p(z)\,\mathrm{d}z\right|=O\left(\frac{1}{|(x-y)s|}\right).
    \end{align*}
    By applying the triangle inequality, the above and equation~\eqref{eq: renewal} yield that
    \begin{align}
    \label{eq: two sums}
        \left|\sum_{w\in \mathcal{W}_t}p_we^{-2\pi i\xi (f_w(x)-f_w(y))}\right|
        =O\left(\frac{1}{|(x-y)s|}+\frac{|(x-y)s|}{t^{1/(8l-7)}}\right).
    \end{align}

    By the definition in equation~\eqref{eq: def of A}, for any $(x,y)\in{[0,1]^2\setminus A_{t}}$,
    \begin{align*}
        t^{\alpha/((1+2\alpha)(8l-7))}
        \leq |(x-y)s|
        \leq t^{(1+\alpha)/((1+2\alpha)(8l-7))},
    \end{align*}
    where $|s|=t^{(1+\alpha)/((1+2\alpha)(8l-7))}>1$.
    Since $\alpha\in[0,1]$, $l\geq2$, and $\xi=se^t$, 
    \begin{align*}
        \log{|\xi|}=\log{|s|}+t=\frac{1+\alpha}{(1+2\alpha)(8l-7)}\log{t}+t< 2t.
    \end{align*}
    The first term in the big-O notation in equation~\eqref{eq: two sums} is bounded above by
    \begin{align*}
        \frac{1}{|(x-y)s|}
        \leq t^{-\alpha/((1+2\alpha)(8l-7))}
        < 2^{1/27}\log^{-\alpha/((1+2\alpha)(8l-7))}{|\xi|}
        .
    \end{align*}
    Similarly, the second term is bounded above by
    \begin{align*}
        \frac{|(x-y)s|}{t^{1/(8l-7)}}
        \leq\frac{t^{(1+\alpha)/((1+2\alpha)(8l-7))}}{t^{1/(8l-7)}}
        \leq t^{-\alpha/((1+2\alpha)(8l-7))}
        < 2^{1/27}\log^{-\alpha/((1+2\alpha)(8l-7))}{|\xi|}
        .
    \end{align*}
    The result follows by the triangle inequality for integrals.
\end{proof}

It now remains to complete the proof of Theorem~\ref{thm: 1}.
\begin{proof}[Proof of Theorem~\ref{thm: 1}]
Let $\alpha\in[0,1]$ be an upper regularity exponent of $\mu$. Pick any $\xi\in\mathbb{R}$. Suppose $|\xi|$ is large enough in terms of $\alpha$ and $l$, so that the unique solution $t\coloneqq t(\xi)>1$ to equation~\eqref{eq: C1} satisfies both $t>1+\max{\Lambda}$ and $t>t_\alpha$, where $\Lambda$ is the support of $\lambda$, defined in \eqref{eq: lambda}, and $t_\alpha$ is given in Proposition~\ref{prop: refined Li-S Proposition 3.2}. Note that $\log{|\xi|}<2t$.

By applying Propositions~\ref{prop: refined Li-S Lemma 3.1}, \ref{prop: refined Li-S Proposition 3.2} and \ref{prop: refined Li-S Proposition 3.5},
\begin{align*}
    \left|\widehat{\mu}(\xi)\right|^2
    &\leq
    \left(\iint_{A_{t}}+\iint_{[0,1]^2\setminus A_{t}}\right)
    \sum_{w\in \mathcal{W}_t}p_we^{-2\pi i\xi (f_w(x)-f_w(y))}
    \, \mu(\mathrm{d}x) \, \mu(\mathrm{d}y) \\
    &=O\left( t^{-\alpha/((1+2\alpha)(8l-7))}\right)+O\left( \log^{-\alpha/((1+2\alpha)(8l-7))}{|\xi|} \right)
    =O\left(\log^{-\alpha/((1+2\alpha)(8l-7))}{|\xi|}\right).
\end{align*}
Theorem~\ref{thm: 1} is established.
\end{proof}

\section{Proof of Theorem~\ref{thm: 2}}
A few definitions and propositions are presented before starting the main proof of Theorem~\ref{thm: 2}. 
Let $\mathcal{A}$ be a non-empty finite set. Define $f_{\mathcal{A}}:[0,1]\to\mathbb{R}^+$ by, for any $s\in[0,1]$,
\begin{align}\label{eq: f_A}
    f_{\mathcal{A}}(s)
    \coloneqq\sum_{w\in\mathcal{A}}{r_w}^s.
\end{align}
Let $s\in[0,1]$. Define, for any $w\in\mathcal{A}$, a weight $p_{s,w}$ by
\begin{align}
\label{eq:p_s_w}
    p_{s,w}
    \coloneqq\frac{{r_w}^s}{f_{\mathcal{A}}(s)}
    .
\end{align}
Define a probability measure $\nu_{s}$ on $\mathcal{A}^\mathbb{N}$ by, for any $(a_n)_{n\in\mathbb{N}}\in\mathcal{A}^\mathbb{N}$ and $n\in\mathbb{N}$,
\begin{align*}
    \nu_s\left(\left\{(w_n)_{n\in\mathbb{N}}\in\mathcal{A}^\mathbb{N} : w_1 = a_1, w_2 = a_2, \ldots, w_n = a_n \right\} \right)
    \coloneqq p_{s,a_1} \, p_{s,a_2} \cdots p_{s,a_n}.
\end{align*}
Define a coding map $\pi:\mathcal{A}^\mathbb{N}\to F$ by, for any $w=(w_n)_{n\in\mathbb{N}}\in\mathcal{A}^\mathbb{N}$,
\begin{align*}
    \pi(w)
    \coloneqq \bigcap_{n\in\mathbb{N}}\left(f_{w_n} \circ \cdots \circ f_{w_2}\circ f_{w_1}\right)(F).
\end{align*}
By the assumption that the images $(f_w([0,1]))_{w\in\mathcal{A}}$ are pairwise disjoint except at the endpoints, the countable intersection in the expression of $\pi$ is a singleton in $F$.

Define a measure $\mu_{s}$ to be the push-forward measure of $\nu_s$ under the coding map $\pi$; that is, for any Borel subset $S\subset [0,1]$,
\begin{align}
\label{eq: mu definition}
    \mu_s(S)
    \coloneqq\left(\nu_s\circ\pi^{-1}\right)(S).
\end{align}
$\mu_s$ is a probability measure on $F$, as
\begin{align*}
    \mu_s(F)
    =\left(\nu_s\circ\pi^{-1}\right)(F)
    =\nu_s\left( \mathcal{A}^\mathbb{N} \right)
    =1
    ,
\end{align*}
and $\mu_s$ is self-similar, as
\begin{align*}
    \mu_s
    =\sum_{w\in\mathcal{A}}p_{s,w}\mu_{s}\circ {f_w}^{-1}.
\end{align*}

Proposition~\ref{prop: Holder} provides an upper regularity exponent for $\mu_s$. 
\begin{prop}\label{prop: Holder}
    Let $s\in[0,1]$. Suppose, $f_{\mathcal{A}}(s)\geq1$. Then, for any interval $I\subset [0,1]$, 
    \begin{align*}
        \mu_s(I)
        =O\left((\operatorname{diam}{I})^{s}\right),
    \end{align*}
    where $f_{\mathcal{A}}$ and $\mu_s$ are defined in \eqref{eq: f_A} and \eqref{eq: mu definition} respectively; and $\operatorname{diam}{I}$ denotes the diameter of $I$.
\end{prop}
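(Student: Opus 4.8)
The plan is to run the standard mass-distribution (covering) argument at the level of the symbolic space $\mathcal{A}^{\mathbb{N}}$, with the hypothesis $f_{\mathcal{A}}(s)\geq1$ entering at exactly one point. Write $\rho\coloneqq\operatorname{diam}{I}$ and $r_{\min}\coloneqq\min_{w\in\mathcal{A}}r_w\in(0,1)$. Since $\mu_s$ is a probability measure, the bound is immediate when $\rho$ is bounded away from $0$, and when $\rho=0$ it is vacuous if $s=0$ and otherwise follows from non-atomicity of $\mu_s$ (note $|\mathcal{A}|\geq2$ whenever $s>0$, as $f_{\mathcal{A}}(s)\geq1$ fails for a singleton when $s>0$). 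So it suffices to bound $\mu_s(I)$ for $0<\rho\leq1$, and I would estimate it by (number of cylinders at scale $\rho$ meeting $I$) times (maximal $\nu_s$-mass of such a cylinder).

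For the cylinder family I would reuse the stopping-time construction already in the paper: put $t\coloneqq-\log\rho$ and take $\mathcal{W}_t$, the words $w=(w_1,\dots,w_n)$ with $r_{w_1}\cdots r_{w_n}\leq\rho$ at which the stopping time $n_t$ is attained. Minimality of $n_t$ forces $r_{\min}\rho\leq r_w\leq\rho$, so the interval $f_w([0,1])$ has diameter $r_w\in[r_{\min}\rho,\rho]$. The step that genuinely requires an argument is that $\{f_w([0,1]):w\in\mathcal{W}_t\}$ have pairwise disjoint interiors: for distinct $w,w'\in\mathcal{W}_t$ neither is a prefix of the other, so they share a longest common prefix $v$ and then diverge, with $f_w([0,1])\subseteq f_{(v,i)}([0,1])$ and $f_{w'}([0,1])\subseteq f_{(v,j)}([0,1])$ for some $i\neq j$; applying the affine bijection $f_v$ of $[0,1]$ onto $f_v([0,1])$ to the hypothesis that $f_i([0,1])$ and $f_j([0,1])$ meet only at endpoints transfers the disjointness to $f_{(v,i)}([0,1])$ and $f_{(v,j)}([0,1])$, hence to $f_w([0,1])$ and $f_{w'}([0,1])$.

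Next, for the count: if $f_w([0,1])$ with $w\in\mathcal{W}_t$ meets $I$, then, having length $\leq\rho=\operatorname{diam}{I}$, it lies in the $\rho$-neighbourhood of $I$, an interval of length $3\rho$; since these cylinders have pairwise disjoint interiors and each has length $\geq r_{\min}\rho$, comparing Lebesgue measures shows at most $3/r_{\min}$ of them can meet $I$. For the mass, the definition of $\nu_s$ together with \eqref{eq:p_s_w} gives $\nu_s([w])=p_{s,w_1}\cdots p_{s,w_n}=r_w^{\,s}/f_{\mathcal{A}}(s)^{\,n}$; it is here, and only here, that $f_{\mathcal{A}}(s)\geq1$ is used, yielding $\nu_s([w])\leq r_w^{\,s}\leq\rho^{\,s}$ (the last inequality since $r_w\leq\rho$ and $s\geq0$).

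To assemble: for any $\omega\in\pi^{-1}(I)$ the prefix $w=(\omega_1,\dots,\omega_{n_t(\omega)})$ lies in $\mathcal{W}_t$ and satisfies $\pi(\omega)\in f_w(F)\subseteq f_w([0,1])$, so $\omega\in[w]$ and $f_w([0,1])\cap I\neq\emptyset$; hence $\pi^{-1}(I)$ is contained in the finite union of cylinders $[w]$ over those $w\in\mathcal{W}_t$ with $f_w([0,1])\cap I\neq\emptyset$. By \eqref{eq: mu definition} and monotonicity and finite subadditivity of $\nu_s$, I would conclude $\mu_s(I)=\nu_s(\pi^{-1}(I))\leq(3/r_{\min})\,\rho^{\,s}$, with the implied constant depending only on $\mathcal{A}$, which is exactly $\mu_s(I)=O((\operatorname{diam}{I})^{s})$. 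The only real obstacle is the disjointness-inheritance step in the second paragraph; everything else is bookkeeping, including the trivial edge cases noted at the outset.
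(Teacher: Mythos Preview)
Your argument is correct, and it follows a genuinely different route from the paper's. The paper locates the \emph{maximal} cylinder $J_n$ still containing $I$, then splits into two cases according to whether $I$ meets one or several of the level-$(n+1)$ sub-cylinders $J_{n+1,w_{n+1}}$ of $J_n$; the bound comes out as $\#\mathcal{A}\cdot r_{\min}^{-s}(\operatorname{diam}I)^s$. You instead take a Moran cover at scale $\rho=\operatorname{diam}I$ by reusing the stopping-time family $\mathcal{W}_t$ already set up in Section~2, prove that the corresponding cylinders $f_w([0,1])$ have pairwise disjoint interiors, and bound the number of them meeting $I$ by a Lebesgue-measure comparison. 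What your approach buys: it removes the case analysis, the implied constant $3/r_{\min}$ is uniform in $s$, and the possibility of cylinders touching at endpoints is handled automatically by the disjoint-interiors volume count rather than by a separate discussion. What the paper's approach buys: it only ever compares two consecutive levels, so it does not need the disjointness-inheritance step you identify as the one nontrivial ingredient. One small remark: your $\rho=0$ case is cleaner if you simply note that the bound $\mu_s(I)\le(3/r_{\min})\rho^s$ already proved for $\rho>0$ passes to points by taking a decreasing sequence of intervals, which avoids the detour through non-atomicity.
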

\begin{proof}
    Pick any interval $I\subset[0,1]$. By the self-similarity property~\eqref{eq: SSS}, there exist a maximal $n\in\mathbb{N}\setminus\{0\}$ and $w=(w_1,w_2,\ldots,w_n)\in\mathcal{A}^{n}$ such that $I$ is covered by the $w$-cylinder; that is,
    \begin{align*}
        I\subset J_n\coloneq\left(f_{w_n}\circ\cdots\circ f_{w_2}\circ f_{w_1}\right)([0,1]).
    \end{align*}
    By convention, the composition of no functions is defined as the identity function. The interval $I$ intersects at most $\#\mathcal{A}$, the cardinality of $\mathcal{A}$, cylinders of the form
    \begin{align*}
        J_{n+1,w_{n+1}}\coloneq\left(f_{w_{n+1}}\circ f_{w_n}\circ\cdots\circ f_{w_2}\circ f_{w_1}\right)([0,1]),
    \end{align*}
    where $w_{n+1}\in\mathcal{A}$.
    
    Suppose $I$ intersects exactly one $J_{n+1}\coloneqq J_{n+1,w_{n+1}}$ for some $w_{n+1}\in\mathcal{A}$. Without loss of generality, by discarding sub-intervals of $I$ that contribute zero $\mu_s$-measure to $I$, $I$ satisfies $J_{n+1}\subset I\subset J_n$. Thus, $\operatorname{diam}J_{n+1}\leq h\coloneqq \operatorname{diam}I\leq \operatorname{diam}J_n=r_{w_1}r_{w_2}\cdots r_{w_n}$. By the assumption that $f_{\mathcal{A}}(s)\geq1$ and the definition in equation~\eqref{eq:p_s_w}, for any $w\in\mathcal{A}$, $p_{s,w}\leq {r_w}^s$. Thus, 
    \begin{align*}
        \mu_{s}(I)
        &
        \leq \mu_{s}(J_n)
        \leq p_{s,w_1}p_{s,w_2}\cdots p_{s,w_n}
        \leq {r_{w_1}}^{s}{r_{w_2}}^{s}\cdots{r_{w_n}}^{s} \\
        &
        \leq {r_{w_{n+1}}}^{-s}\left(r_{w_1}r_{w_2}\cdots r_{w_{n+1}}\right)^{s}
        \leq {r_{w_{n+1}}}^{-s}h^{s}
        \leq D_sh^{s},
    \end{align*}
    where $D_s\coloneqq\max_{w\in\mathcal{A}}{{r_w}^{-s}}$.
    
    Suppose $I$ intersects more than one $J_{n+1,w_{n+1}}$.
    By the sub-additivity of measures, 
    \begin{align*}
        \mu_{s}(I)
        \leq \sum_{w_{n+1}\in\mathcal{A}}\mu_{s}(J_{n+1,w_{n+1}})
        \leq C_sh^{s},
    \end{align*}
    where $C_s\coloneqq D_s\cdot\#\mathcal{A}$.
\end{proof}

\begin{prop}\label{prop: 7}
    There exists a unique $s=s_\mathcal{A}\in[0,1]$ such that equation~\eqref{eq: sum rw s =1} is satisfied.
\end{prop}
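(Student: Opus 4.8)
The plan is to study the single–variable function $f_{\mathcal{A}}:[0,1]\to\mathbb{R}^+$ defined in \eqref{eq: f_A} and to locate its unique level-one crossing. First I would record that $f_{\mathcal{A}}$ is continuous on $[0,1]$, being a finite sum of the continuous maps $s\mapsto {r_w}^s=e^{s\log{r_w}}$, and that it is \emph{strictly} decreasing: since each $r_w\in(0,1)$ gives $\log{r_w}<0$, each summand $s\mapsto {r_w}^s$ is strictly decreasing, and $\mathcal{A}$ being non-empty makes the sum strictly decreasing as well. This strict monotonicity is exactly what will deliver uniqueness once existence is established.

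Next I would pin down the boundary values. At $s=0$ one has $f_{\mathcal{A}}(0)=\sum_{w\in\mathcal{A}}1=\#\mathcal{A}\geq1$. At $s=1$ one has $f_{\mathcal{A}}(1)=\sum_{w\in\mathcal{A}}r_w$, and here the hypothesis of Theorem~\ref{thm: 2} enters: by \eqref{eq: IPS} each image $f_w([0,1])=[b_w,b_w+r_w]$ is an interval of length $r_w$ contained in $[0,1]$, and since the images $(f_w([0,1]))_{w\in\mathcal{A}}$ are pairwise disjoint except possibly at the endpoints, their union has total length $\sum_{w\in\mathcal{A}}r_w$, which cannot exceed the length $1$ of $[0,1]$. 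Hence $f_{\mathcal{A}}(1)\leq1\leq f_{\mathcal{A}}(0)$.

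Finally, the intermediate value theorem yields some $s=s_{\mathcal{A}}\in[0,1]$ with $f_{\mathcal{A}}(s_{\mathcal{A}})=1$, which is precisely \eqref{eq: sum rw s =1}, and the strict monotonicity of $f_{\mathcal{A}}$ forces $s_{\mathcal{A}}$ to be the only solution in $[0,1]$. The only substantive step is the geometric estimate $\sum_{w\in\mathcal{A}}r_w\leq1$ extracted from the disjointness hypothesis; the remainder is a routine continuity-and-monotonicity argument, and I expect no real obstacle beyond checking that the degenerate case $\#\mathcal{A}=1$ (where $f_{\mathcal{A}}(0)=1$, so $s_{\mathcal{A}}=0$) is subsumed, which the same argument handles uniformly.
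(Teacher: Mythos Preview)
Your proposal is correct and follows essentially the same route as the paper: continuity of $f_{\mathcal{A}}$, the endpoint values $f_{\mathcal{A}}(0)=\#\mathcal{A}\geq1$ and $f_{\mathcal{A}}(1)=\sum_{w\in\mathcal{A}}r_w\leq1$ (the latter deduced from the pairwise disjointness of the images), the Intermediate Value Theorem for existence, and strict monotonicity for uniqueness. The only cosmetic difference is that the paper computes $f_{\mathcal{A}}'(s)=\sum_{w\in\mathcal{A}}{r_w}^s\log r_w<0$ explicitly, whereas you argue monotonicity termwise; both are equivalent.
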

\begin{proof}
    The existence follows from the Intermediate Value Theorem. Since $f_{\mathcal{A}}$ is a finite sum of continuous functions on $[0,1]$, it is itself continuous on $[0,1]$. Notice that, $f_{\mathcal{A}}(0)=\#\mathcal{A}\geq1$ and $f_{\mathcal{A}}(1)=\sum_{w\in\mathcal{A}}r_w\leq1$. Otherwise, the total length of the image intervals $(f_w([0,1]))_{w\in\mathcal{A}}$ would exceed 1, resulting in an uncountable intersection, which contradicts the assumption. By the Intermediate Value Theorem, there exists $s_\mathcal{A}\in[0,1]$ such that $f_{\mathcal{A}}(s_\mathcal{A})=1$. 

    The uniqueness follows from the strict monotonicity of $f_{\mathcal{A}}$. For any $w\in\mathcal{A}$, $r_w\in(0,1)$ and consequently $\log{r_w}<0$. Thus, for any $s\in[0,1]$, the first derivative of $f_{\mathcal{A}}$,
    \begin{align*}
        {f_{\mathcal{A}}}'(s)
            =\sum_{w\in\mathcal{A}}{r_w}^s\log{r_w}<0.
    \end{align*}
    It follows that $f_{\mathcal{A}}$ is strictly decreasing on $[0,1]$. Hence, the equation $f_{\mathcal{A}}(s)=1$ has at most one solution for $s\in[0,1]$.
\end{proof}

It now remains to complete the proof of Theorem~\ref{thm: 2}; this is the unique solution to equation~\eqref{eq: sum rw s =1} given by Proposition~\ref{prop: 7} is the Hausdorff dimension. The proof is separated into two parts, establishing the inequalities $\dim{F}\leq s_\mathcal{A}$ and $\dim{F}\geq s_\mathcal{A}$. The first inequality is proved by a standard covering argument, and the second is proved by applying Proposition~\ref{prop: Holder}.

\begin{proof}[Proof of Theorem~\ref{thm: 2}]
Define $r^*\coloneqq\max_{w\in\mathcal{A}}{r_w}\in(0,1)$. Pick any $s>s_\mathcal{A}$, $\varepsilon>0$, and $\rho>0$. For these parameters, take
\begin{align*}
    k\coloneqq\max{\left\{1,\left\lceil\frac{\log{\rho}}{\log{r^*}}\right\rceil,\left\lceil\frac{\log{\varepsilon}}{\log{f_{\mathcal{A}}(s)}}\right\rceil\right\}}\in\mathbb{N},
\end{align*}
where $f_{\mathcal{A}}$ is defined in equation~\eqref{eq: f_A}. Define $\mathcal{F}_k$ to be the set of all cylinders at level $k$ by
\begin{align*}
    \mathcal{F}_k
    \coloneqq
    \left\{\left(f_{w_k}\circ\cdots \circ f_{w_2}\circ  f_{w_1}\right)([0,1])\subset[0,1]:(w_1,w_2,\ldots,w_k)\in\mathcal{A}^k
    \right\}
    ,
\end{align*}
where each $f_w$ is given by equation~\eqref{eq: IPS}. By the self-similarity property~\eqref{eq: SSS}, $\mathcal{F}_k$ is a cover of $F$. There are ${(\#\mathcal{A})}^k$ pairwise disjoint intervals (except possibly at the endpoints) in $\mathcal{F}_k$, each of length at most $\rho$, as for any $(w_1,w_2,\ldots,w_k)\in\mathcal{A}^k$,
\begin{align*}
    \operatorname{diam}{\left(f_{w_k}\circ\cdots \circ f_{w_2}\circ  f_{w_1}\right)([0,1])}
    =r_{w_1}r_{w_2}\cdots r_{w_k} 
    \leq (r^*)^k 
    \leq \rho.
\end{align*}
Then, $\mathcal{F}_k$ is a $\rho$-cover of $F$, and the $s$-dimensional Hausdorff measure of $F$ is at most
\begin{align*}
    \sum_{I\in \mathcal{F}_k}(\operatorname{diam}{I})^s
    &=\sum_{w_1\in\mathcal{A}}
    \sum_{w_2\in\mathcal{A}}\cdots\sum_{w_k\in\mathcal{A}}\left(r_{w_1}r_{w_2}\cdots r_{w_k}\right)^s 
    =\left(\sum_{w\in\mathcal{A}}{r_w}^s\right)^k 
    = \left(f_{\mathcal{A}}(s)\right)^k 
    \leq\varepsilon,
\end{align*} 
as $f_{\mathcal{A}}$ is strictly decreasing on $[0,1]$, and $s>s_\mathcal{A}$ implies that $f_{\mathcal{A}}(s)<f_{\mathcal{A}}(s_\mathcal{A})=1$. Therefore, the $s$-Hausdorff measure of $F$ is 0, and consequently $\dim{F}\leq s_\mathcal{A}$.

Pick any $s<s_\mathcal{A}$ and countable collection of intervals $(I_j)_{j\in\mathbb{N}}$. Suppose $(I_j)_{j\in\mathbb{N}}$ covers $F$, that is $F\subset\bigcup_{j\in\mathbb{N}}I_j$. Since $f_{\mathcal{A}}$ is strictly decreasing on $[0,1]$, $f_{\mathcal{A}}(s)>f_{\mathcal{A}}(s_\mathcal{A})=1$. By Proposition~\ref{prop: Holder},
\begin{align*}
    \sum_{j\in\mathbb{N}}(\operatorname{diam}{I_j})^s
    \gg \sum_{j\in\mathbb{N}}\mu_s{(I_j)}
    \geq\mu_s(F)=1.
\end{align*}
Therefore, the $s$-Hausdorff measure of $F$ is positive, and consequently $\dim{F}\geq s_\mathcal{A}$.

Theorem~\ref{thm: 2} is established.
\end{proof}

\section{Proof of Theorem~\ref{thm: 3}}
\begin{proof}[Proof of Theorem~\ref{thm: 3}]
By Theorem~\ref{thm: 2} and the definition in equation~\eqref{eq: f_A}, $f_{\mathcal{A}}(\dim{F})=1$. By Proposition~\ref{prop: Holder}, for any interval $I\subset[0,1]$,
\begin{align*}
    \mu_{\mathcal{A}}(I)
    = O\left((\operatorname{diam}{I})^{\dim{F}}\right),
\end{align*}
where $\mu_{\mathcal{A}}\coloneqq\mu_{\dim{F}}$, as defined in equation~\eqref{eq: mu definition}. Thus, the Hausdorff dimension of $F$ is the maximal upper regularity exponent of $\mu_{\mathcal{A}}$. By setting $\alpha\coloneqq \dim{F}$ in Theorem~\ref{thm: 1}, the desired decay rate in Theorem~\ref{thm: 3} is established. 
\end{proof}

\section{Proof of Theorem~\ref{thm: 4}}
By equations~\eqref{eq: Luroth SSS} and \eqref{eq: Luroth IFS}, it is evident that for any finite $\mathcal{A}\subset\mathbb{N}\setminus\{1\}$, the set $L_{\mathcal{A}}\subset[0,1]$ is self-similar, and the images $(f_w([0,1]))_{w \in \mathcal{A}}$ are pairwise disjoint except possibly at the endpoints. To establish Theorem~\ref{thm: 4}, as an application of Theorem~\ref{thm: 3}, it suffices to prove the remaining non-Liouville condition for non-singleton $\mathcal{A}\subset\mathbb{N}\setminus\{1\}$.

First, the linear independence of logarithms of the contraction ratios is proved. Then, the explicit Baker Theorem by Matveev \cite[Theorem 2.1]{matveev2000explicit} is applied, establishing the non-Liouville degree explicitly.

\begin{prop}\label{prop: Luroth linearly independent}
    For any $a_1,a_2\in\mathbb{N}\setminus\{1\}$, if $a_1\ne a_2$ then $\log{(a_1(a_1-1))}$ and $\log{(a_2(a_2-1))}$ are linearly independent over $\mathbb{Z}$.
\end{prop}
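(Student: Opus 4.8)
The plan is to argue by contraposition, showing that if $\log(a_1(a_1-1))$ and $\log(a_2(a_2-1))$ are linearly dependent over $\mathbb{Z}$, then necessarily $a_1=a_2$. Write $N(a)\coloneqq a(a-1)$, and note $N(a)\ge 2$ whenever $a\in\mathbb{N}\setminus\{1\}$. A $\mathbb{Z}$-linear dependence supplies integers $m,n$, not both zero, with $(N(a_1))^m (N(a_2))^n = 1$ after exponentiating. Since each base exceeds $1$, the exponents cannot be both nonnegative nor both nonpositive, so $m$ and $n$ are nonzero and of opposite sign; after possibly negating both, I may assume $m\ge 1$ and $n_0\coloneqq -n\ge 1$, which leaves the purely multiplicative relation $N(a_1)^m = N(a_2)^{n_0}$ to be analysed.

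Next I would apply the elementary fact that a relation $A^m=B^{n_0}$ between positive integers forces $A$ and $B$ to be powers of a common integer: writing $d\coloneqq\gcd(m,n_0)$, $m=dm'$, $n_0=dn'$ with $\gcd(m',n')=1$, one gets $A^{m'}=B^{n'}$, and comparing $p$-adic valuations together with $\gcd(m',n')=1$ yields a positive integer $C$ with $A=C^{n'}$ and $B=C^{m'}$. Applied with $A=N(a_1)$ and $B=N(a_2)$, and using $N(a_i)\ge 2$ (so $C\ge 2$), this gives $N(a_1)=C^{n'}$ and $N(a_2)=C^{m'}$. The crux is then the claim that $a(a-1)$ is never a perfect $k$-th power for $k\ge 2$ when $a\ge 2$: since $\gcd(a,a-1)=1$, the equation $a(a-1)=c^k$ would force $a=u^k$ and $a-1=v^k$ with integers $u>v\ge 1$ (the case $v=0$ being excluded by $a\ge 2$), hence $u^k-v^k=1$; but $u^k-v^k\ge (v+1)^k-v^k\ge k v^{k-1}\ge k\ge 2$, a contradiction.

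Consequently $n'=1$ and $m'=1$, so $N(a_1)=C=N(a_2)$, and since $x\mapsto x(x-1)$ is strictly increasing on integers $\ge 2$ this gives $a_1=a_2$, completing the contrapositive. The only genuinely substantive ingredient is the perfect-power exclusion in the second paragraph, and even that reduces to the one-line inequality $u^k-v^k\ge kv^{k-1}$; the rest is routine bookkeeping with signs and valuations. (One may equivalently pass first to linear independence over $\mathbb{Q}$, but this makes no difference to the argument.)
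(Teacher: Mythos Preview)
Your proof is correct and follows essentially the same route as the paper: both argue by contraposition, reduce to the key claim that $a(a-1)$ is never a nontrivial perfect power via $\gcd(a,a-1)=1$, and conclude $N(a_1)=N(a_2)$ hence $a_1=a_2$. Your write-up is in fact more explicit than the paper's on two points---the common-base reduction from $A^{m}=B^{n_0}$ and the inequality $u^k-v^k\ge kv^{k-1}$ ruling out consecutive $k$-th powers---whereas the paper compresses these into a single sentence.
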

\begin{proof}
    The contrapositive of the statement is proved. Suppose there exists $k\in\mathbb{Q}$ such that 
    \begin{align}
    \label{eq: Luroth exp form}
        a_1(a_1-1)=(a_2(a_2-1))^k\geq2.
    \end{align}

    Suppose there exist $n,m\in\mathbb{N}$ such that $n\geq2$ and $a_1(a_1-1)=n^m$. Let ${p_1}^{e_1}{p_2}^{e_2}\cdots {p_s}^{e_s}$ be the prime factorisation of $n$, where for any $i\in\mathbb\{1,2,\ldots,s\}$, $p_i$ is a distinct prime and $e_i\in\mathbb{N}$.
    Since $\gcd(a_1,a_1-1)=1$, without loss of generality (if necessary, relabel the primes accordingly), there exists $r\in\{1,2,\ldots,s\}$ such that $a_1=({p_1}^{e_1}{p_2}^{e_2}\cdots {p_r}^{e_r})^m$ and $a_1-1=({p_{r+1}}^{e_{r+1}}{p_{r+2}}^{e_{r+2}}\cdots {p_s}^{e_s})^m$. Thus, $a_1$ and $a_1-1$ are consecutive positive integers, and both are $m$-th powers of positive integers. The only possibility is $m=1$.

    Therefore, both $a_1(a_1-1)$ and $a_2(a_2-1)$ are not perfect powers. By equation~\eqref{eq: Luroth exp form}, the only possibility is $k=1$. Hence, $a_1(a_1-1)=a_2(a_2-1)$, and consequently, $a_1=a_2$.
\end{proof}
\begin{prop}\label{prop: Luroth SSL}
For any $a_1,a_2\in\mathbb{N}\setminus\{1\}$, if $a_1\ne a_2$ then
\begin{align*}
    \frac{\log{(a_1(a_1-1))}}{\log{(a_2(a_2-1))}}
\end{align*}
is non-Liouville of degree $l_{a_1,a_2}$, where
\begin{align}
    \label{eq: l a1 a2}
    l_{a_1,a_2}
    &\coloneqq387072e^3(15.8+5.5\log{2})\log{(a_1(a_1-1))}\log{(a_2(a_2-1))}+1 \\
    &>189369098. \nonumber
\end{align}
\end{prop}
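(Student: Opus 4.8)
The plan is to apply Matveev's explicit lower bound for linear forms in logarithms, \cite[Theorem 2.1]{matveev2000explicit}, to the two-term integer form built from $\log(a_1(a_1-1))$ and $\log(a_2(a_2-1))$, using Proposition~\ref{prop: Luroth linearly independent} to guarantee that the form never vanishes. Write $A_1\coloneqq a_1(a_1-1)\geq 2$ and $A_2\coloneqq a_2(a_2-1)\geq 2$, and set $\theta\coloneqq\log A_1/\log A_2$. By Proposition~\ref{prop: Luroth linearly independent}, the real numbers $\log A_1$ and $\log A_2$ are linearly independent over $\mathbb{Z}$, so $\theta$ is irrational.

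First I would reduce to the case of a good rational approximation. Fix $p\in\mathbb{Z}$ and $q\in\mathbb{N}$. If $|\theta-p/q|\geq1$, then \eqref{eq: non-Liouville} holds for any $c\leq1$ since $q^{l}\geq1$, so we may assume $|\theta-p/q|<1$; then $|p|\leq(\theta+1)q$, hence $B\coloneqq\max\{q,|p|\}\leq(\theta+1)q$. For any fixed upper bound on $q$ there are only finitely many pairs $(p,q)$ with $|\theta-p/q|<1$, and for each of them $\theta-p/q\neq0$ because $\theta$ is irrational; after shrinking $c$ we may therefore assume $q$ is as large as we need. Since $|\theta-p/q|=|q\log A_1-p\log A_2|/(q\log A_2)$, it suffices to bound the linear form
\begin{align*}
    \Lambda\coloneqq q\log A_1-p\log A_2
\end{align*}
from below, and $\Lambda\neq0$ for $q\geq1$ precisely by the linear independence in Proposition~\ref{prop: Luroth linearly independent}.

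The key step is the application of Matveev's theorem with $n=2$, taking all algebraic data over $\mathbb{Q}$: the degree is $D=1$, the numbers $A_1,A_2$ are positive integers with absolute logarithmic heights $h(A_i)=\log A_i=|\log A_i|$, so one may take the Matveev height parameters equal to $\log A_1$ and $\log A_2$, and the coefficient bound is $B$ as above. Matveev's constant for $n=2$ in the real case evaluates to
\begin{align*}
    C(2)=\frac{16}{2!}\,e^{2}\,(2\cdot2+1+2)\,(2+2)\,(4\cdot3)^{3}=387072\,e^{2},
\end{align*}
and \cite[Theorem 2.1]{matveev2000explicit} then yields $\log|\Lambda|\geq-387072\,e^{2}\cdot(\text{the remaining explicit Matveev factors})\cdot\log A_1\cdot\log A_2\cdot\log(eB)$. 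Exponentiating, inserting $B\leq(\theta+1)q$, and using $|\theta-p/q|=|\Lambda|/(q\log A_2)$ gives, once $q$ is large enough that $\log(eB)$ is dominated by a fixed multiple of $\log q$, an estimate $|\theta-p/q|\geq c\,q^{-l}$ with
\begin{align*}
    l=387072\,e^{3}\,(15.8+5.5\log2)\,\log A_1\cdot\log A_2+1=l_{a_1,a_2},
\end{align*}
where the numerical factors $e^{3}$ and $15.8+5.5\log2$ arise from carrying Matveev's auxiliary quantities together with the reduction $B\leq(\theta+1)q$ and the passage from $\log(eB)$ to a multiple of $\log q$. Absorbing the finitely many excluded small-$q$ pairs into $c$ then shows $\theta$ is non-Liouville of degree $l_{a_1,a_2}$. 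The inequality $l_{a_1,a_2}>189369098$ follows by minimising $\log A_1\cdot\log A_2$ over distinct $a_1,a_2\in\mathbb{N}\setminus\{1\}$: the two smallest values of $a(a-1)$ are $2$ and $6$, so $\log A_1\cdot\log A_2\geq\log2\cdot\log6$, and substituting this value and rounding down gives the stated bound.

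The main obstacle is not conceptual but a matter of careful bookkeeping: one must quote Matveev's theorem in a sufficiently precise form and propagate every explicit constant, handle the reduction from the coefficient bound $B$ to a power of $q$ uniformly in $q$, and treat the finitely many exceptional small-$q$ pairs, so that the resulting non-Liouville degree is exactly the stated $l_{a_1,a_2}$ rather than merely some unspecified effective constant.
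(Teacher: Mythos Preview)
Your approach is essentially the paper's: apply Matveev's Theorem~2.1 with $n=2$, $D=1$ to the form $\Lambda=q\log A_1-p\log A_2$, invoke Proposition~\ref{prop: Luroth linearly independent} for $\Lambda\neq0$, exponentiate, and divide by $q\log A_2$.

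The one concrete inaccuracy is your attribution of the numerical constants. The factor $387072e^{3}$ is Matveev's $C(n)$ outright: your formula for $C(2)$ omits the factor $(\tfrac{1}{2}en)^{\kappa}=e$ present in Matveev's statement (real case $\kappa=1$), so $C(2)=387072e^{3}$, not $387072e^{2}$. Likewise $15.8+5.5\log 2$ is precisely Matveev's $C_0=\log\bigl(e^{4.4n+7}n^{5.5}D^{2}\log(eD)\bigr)$ at $n=2$, $D=1$; it does not come from the reduction $B\leq(\theta+1)q$ or from replacing $\log(eB)$ by $\log q$. In the paper's version the quantity $W_0$ already carries the $\log q$ term, so Matveev's inequality yields $|\theta-p/q|\geq c_{a_1,a_2}/q^{l_{a_1,a_2}}$ directly for every $p\in\mathbb{Z}$ and $q\in\mathbb{N}$, with the final $+1$ in $l_{a_1,a_2}$ coming only from the division by $q$; no separate small-$q$ case analysis is needed.
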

\begin{proof}
Pick any $p\in\mathbb{Z}$ and $q\in\mathbb{N}$. By Proposition~\ref{prop: Luroth linearly independent} and \cite[Theorem 2.1]{matveev2000explicit},
\begin{align}
\label{eq: Luroth log linear}
    L(p,q,a_1,a_2)
    \coloneqq\log{|q\log{(a_1(a_1-1))}-p\log{(a_2(a_2-1))}|} >-C(n)C_0W_0D^2\Omega,
\end{align}
where the values in \cite[Theorem 2.1]{matveev2000explicit} are $n=2$, $D=1$, $x=1$, and
\begin{align*}
    C(n) &= 387072e^3; \\
    C_0  &= 15.8+5.5\log{2}; \\
    B &=\max\left\{1,|p|,\frac{\log{(a_1(a_1-1))}}{\log{(a_2(a_2-1))}}q\right\}\geq\frac{\log{(a_1(a_1-1))}}{\log{(a_2(a_2-1))}}q; \\
    W_0  &= \log{\left(\frac{3e\log{(a_1(a_1-1))}}{2\log{(a_2(a_2-1))}}q\right)}; \\
    \Omega &=\log{(a_1(a_1-1))}\log{(a_2(a_2-1))}.
\end{align*}
By taking the exponential function on both sides of equation~\eqref{eq: Luroth log linear},
\begin{align*}
    \exp{L(p,q,a_1,a_2)}
    &=\left|q\log{(a_1(a_1-1))}-p\log{(a_2(a_2-1))}\right| \\
    &>\left(\frac{3e\log{(a_1(a_1-1))}}{2\log{(a_2(a_2-1))}}q\right)^{-C(n)C_0\Omega} \\
    &> c_{a_1,a_2}q^{-387072e^3(15.8+5.5\log{2})\log{(a_1(a_1-1))}\log{(a_2(a_2-1))}}\log{(a_2(a_2-1))},
\end{align*}
where
\begin{align*}
    c_{a_1,a_2}
    \coloneqq\frac{1}{\log{(a_2(a_2-1))}}\left(\frac{3e\log{(a_1(a_1-1))}}{2\log{(a_2(a_2-1))}}\right)^{-C(n)C_0\Omega}>0.
\end{align*}
By dividing both sides by $q\log{(a_2(a_2-1))}$,
\begin{align*}
    \left|\frac{\log{(a_1(a_1-1))}}{\log{(a_2(a_2-1))}}-\frac{p}{q}\right|&>\frac{c_{a_1,a_2}}{q^{l_{a_1,a_2}}},
\end{align*}
where $l_{a_1,a_2}$ is defined in equation~\eqref{eq: l a1 a2}.
\end{proof}

Proposition~\ref{prop: 10} offers a faster decay rate than Theorem~\ref{thm: 4}, but its decay parameter involves a more complicated expression. Theorem~\ref{thm: 4} is derived from Proposition~\ref{prop: 10}, provides a more accessible formula, and facilitates understanding how the decay rate varies with the parameters.

\begin{prop}\label{prop: 10}
    For any non-singleton finite $\mathcal{A}\subset\mathbb{N}\setminus\{1\}$, there exists a self-similar probability measure $\mu_{\mathcal{A}}$ on $L_{\mathcal{A}}$ such that as $\xi\to\pm\infty$,
    \begin{align*}
        \widehat{\mu_{\mathcal{A}}}(\xi)=O\left(\log^{-\beta_{\mathcal{A},0}}{|\xi|}\right),
    \end{align*}
    where $a_1$ and $a_2$ are the first smallest and second smallest elements in $\mathcal{A}$ respectively, and
    \begin{align*}
        \beta_{\mathcal{A},0}
        \coloneqq
        \frac{1}{2}\frac{\dim{L_\mathcal{A}}}{1+2\dim{L_\mathcal{A}}}\frac{1}{3096576e^3(15.8+5.5\log{2})\log{(a_1(a_1-1))}\log{(a_2(a_2-1))}+1}
        >0
        .
    \end{align*}
\end{prop}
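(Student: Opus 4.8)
The plan is to obtain Proposition~\ref{prop: 10} as a direct application of Theorem~\ref{thm: 3}, once the hypotheses of the latter are checked for $F=L_{\mathcal{A}}$ with the non-Liouville degree supplied by Proposition~\ref{prop: Luroth SSL}. First I would record the geometric data: by \eqref{eq: Luroth SSS} and \eqref{eq: Luroth IFS}, $L_{\mathcal{A}}$ is a self-similar set of the form \eqref{eq: SSS} associated with the similitudes $f_d$ of \eqref{eq: Luroth IFS}, whose contraction ratios are $r_d=1/(d(d-1))$, so that $-\log r_d=\log(d(d-1))>0$; and, as already observed from the construction illustrated in Figure~\ref{fig: L3}, the images $(f_d([0,1]))_{d\in\mathcal{A}}$ are pairwise disjoint except at endpoints. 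Thus the disjointness hypothesis of Theorem~\ref{thm: 3} holds.

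Next I would verify the non-Liouville hypothesis. Since $\mathcal{A}$ is non-singleton, its two smallest elements satisfy $a_1<a_2$, in particular $a_1\ne a_2$. Applying Proposition~\ref{prop: Luroth SSL} to this pair gives that
\[
    \frac{\log r_{a_1}}{\log r_{a_2}}=\frac{\log(a_1(a_1-1))}{\log(a_2(a_2-1))}
\]
is non-Liouville of degree $l_{a_1,a_2}$, and the bound $l_{a_1,a_2}>189369098$ in particular guarantees $l_{a_1,a_2}\geq 2$. Taking $j=a_1$, $k=a_2$, $l=l_{a_1,a_2}$ in Theorem~\ref{thm: 3} therefore produces a self-similar probability measure $\mu_{\mathcal{A}}$ on $L_{\mathcal{A}}$ with
\[
    \widehat{\mu_{\mathcal{A}}}(\xi)=O\left(\log^{-\dim{L_{\mathcal{A}}}/(2(1+2\dim{L_{\mathcal{A}}})(8l_{a_1,a_2}-7))}{|\xi|}\right).
\]

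It then remains only to identify the exponent with $\beta_{\mathcal{A},0}$ and to check positivity. From \eqref{eq: l a1 a2}, since $8\cdot 387072=3096576$ and $8\cdot 1-7=1$, one has $8l_{a_1,a_2}-7=3096576e^3(15.8+5.5\log 2)\log(a_1(a_1-1))\log(a_2(a_2-1))+1$, which is exactly the denominator appearing in $\beta_{\mathcal{A},0}$; hence $\dim{L_{\mathcal{A}}}/(2(1+2\dim{L_{\mathcal{A}}})(8l_{a_1,a_2}-7))=\beta_{\mathcal{A},0}$ and the displayed decay is precisely the claimed one. For positivity: $L_{\mathcal{A}}$ is non-singleton, so by Theorem~\ref{thm: 2} applied to the ratios $r_d=1/(d(d-1))$, $\dim{L_{\mathcal{A}}}=s_{\mathcal{A}}$ is the unique solution in $[0,1]$ of $\sum_{d\in\mathcal{A}}(d(d-1))^{-s}=1$, and since at $s=0$ the left side equals $\#\mathcal{A}\geq 2>1$, strict monotonicity forces $s_{\mathcal{A}}>0$; all remaining factors of $\beta_{\mathcal{A},0}$ are manifestly positive, so $\beta_{\mathcal{A},0}>0$.

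There is essentially no obstacle here: all the analytic weight has been absorbed into Theorem~\ref{thm: 3} (quantitative renewal theory) and Proposition~\ref{prop: Luroth SSL} (Matveev's explicit Baker theorem), and what is left is the bookkeeping that $8l_{a_1,a_2}-7$ simplifies to $3096576e^3(15.8+5.5\log 2)\log(a_1(a_1-1))\log(a_2(a_2-1))+1$. The only point warranting a moment of care is confirming that the two smallest elements of $\mathcal{A}$ are a legitimate input to Proposition~\ref{prop: Luroth SSL}, i.e.\ that they are distinct, which is immediate from the non-singleton assumption.
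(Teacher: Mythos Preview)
Your proof is correct and follows essentially the same route as the paper: verify the hypotheses of Theorem~\ref{thm: 3} for $L_{\mathcal{A}}$, feed in the non-Liouville degree $l_{a_1,a_2}$ from Proposition~\ref{prop: Luroth SSL}, and carry out the arithmetic $8l_{a_1,a_2}-7=3096576e^3(15.8+5.5\log 2)\log(a_1(a_1-1))\log(a_2(a_2-1))+1$. The paper's proof is terser, and additionally remarks that $a_1,a_2$ are chosen as the two smallest elements of $\mathcal{A}$ precisely because $l_{a_1,a_2}$ is strictly increasing in each argument, so this choice minimises $l$ and hence maximises the decay exponent; you simply take this choice as given by the statement, which is also fine.
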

\begin{proof}
By the formula for the non-Liouville degree in equation~\eqref{eq: l a1 a2}, Theorem~\ref{thm: 3} gives the desired decay rate after substitution and calculation. The fastest decay rate under Theorem~\ref{thm: 3} is achieved when $l_{a_1,a_2}$ is minimised. Since $l_{a_1,a_2}$ is jointly and strictly increasing, $a_1$ and $a_2$ are selected to be the first smallest and the second smallest elements in $\mathcal{A}$ respectively.
\end{proof}

\begin{proof}[Proof of Theorem~\ref{thm: 4}]
Theorem~\ref{thm: 4} follows directly from Proposition~\ref{prop: 10} and that $\beta_{\mathcal{A},0}\geq\beta_{\mathcal{A}}$.
\end{proof}

\bibliographystyle{siam}
\bibliography{name}

\end{document}